\newtheorem{thm}{Theorem}[section]
\newtheorem{cor}[thm]{Corollary}
\newtheorem{lem}[thm]{Lemma}
\theoremstyle{definition}
\newtheorem{prob}{Problem}
\theoremstyle{remark}
\newtheorem{rem}[thm]{Remark}
\numberwithin{equation}{section} \numberwithin{figure}{section}
\newcommand{\TT}{{\mathbb T}}
\begin{document}
\bibliographystyle{alpha}

\title[Cyclicity in RKHS]{Cyclicity in reproducing kernel Hilbert spaces of analytic functions}

\author[Fricain]{Emmanuel Fricain}
\address{Laboratoire Paul Painlev\'e, UFR de Math\'ematiques, B\^{a}timent M2, Universit\'e des Sciences et Technologies Lille 1, 59 655 Villeneuve d'Ascq C\'edex, France.}
\email{emmanuel.fricain@math.univ-lille1.fr}

\author[Mashreghi]{Javad Mashreghi}
\address{D\'epartament de Mathematiques et de Statistique, Universit\'e Laval, Qu\'ebec, QC, G1K 7P4, Canada.}
\email{javad.mashreghi@mat.ulaval.ca}

\author[Seco]{Daniel Seco}
\address{Mathematics Institute, Zeeman Building, University of Warwick, Coventry CV4 7AL, UK.} \email{d.seco@warwick.ac.uk}

\thanks{For this work, we were supported by grants from Labex CEMPI (ANR-11-LABX-0007-01), NSERC (100756), MEC/MICINN Project MTM2008-00145 and ERC Grant 2011-ADG-20110209 from EU programme FP2007-2013.}

\date{\today}

\keywords{Cyclicity, optimal approximation.}
\subjclass[2010]{Primary: 47A16. Secondary: 47B32, 47B37.}
\begin{abstract}
We introduce a large family of reproducing kernel Hilbert spaces $\mathcal{H} \subset \mbox{Hol}(\mathbb{D})$, which include the classical Dirichlet-type spaces $\mathcal{D}_\alpha$, by requiring normalized monomials to form a Riesz basis for $\mathcal{H}$. Then, after precisely evaluating the $n$-th optimal norm and the $n$-th approximant of $f(z)=1-z$,  we completely characterize the cyclicity of functions in $\mbox{Hol}(\overline{\mathbb{D}})$ with respect to the forward shift.
\end{abstract}

\maketitle

\section{Introduction}

\subsection{Overview}
In this note, we study the cyclicity problem with respect to the
forward shift operator acting on a rather general reproducing kernel
Hilbert space (RKHS) which includes in particular the Hardy space,
the Dirichlet space and the Bergman space. This problem has a long
outstanding history and many efforts have been dedicated to solving
it in various RKHS. In his pioneering work \cite{B1}, Beurling
showed that cyclicity in the Hardy space $H^2$ is equivalent to
being outer. Brown and Shields studied the cyclicity in the
Dirichlet space for the polynomials that do not have zeros inside
the disc, but that do have them on its boundary. Such functions are
cyclic in $D_\alpha$ if and only if $\alpha \leq 1$. They also
proved that the set of zeros (in radial sense) of cyclic functions
in the Dirichlet space has zero logarithmic capacity and this led
them to ask whether any outer function with this property is cyclic
\cite{BS1}. This problem is still open although there has been
relevant contributions to the topic by a number of authors; e.g. see
\cite{B2, BC1, EFKR1, EFKR2, HS1, RS1} and the survey papers
\cite{ARSW1, R1}.

In the recent paper \cite{BCLSS1}, a method is given to find the sequence of the $n$-th optimal approximant $(p_n^*)_{n \geq 0}$, and the $n$-th optimal norm for the Dirichlet space. The first aim of the present paper is to study these concepts in the more general context of RKHS introduced above.

\subsection{Reproducing kernel Hilbert spaces} \label{S:rkhs}
Let $\mathcal{H}$ be a Hilbert space of analytic functions on the
open unit disc $\mathbb{D}$ which satisfies the following
properties:

\begin{enumerate}[(i)]
\item For any $n\geq 0$, $\chi_n\in\mathcal{H}$, where $\chi_n(z)=z^n$, and
\[
\lim_{n \to \infty} \frac{\|\chi_{n+1}\|_{\mathcal{H}}}{\|\chi_n\|_{\mathcal{H}}} = 1;
\]
\item $(\chi_n/\|\chi_n\|_{\mathcal{H}})_{n \geq 0}$ is a Riesz basis for $\mathcal{H}$.
\end{enumerate}

More explicitly, the last condition says that, for each $f\in\mathcal{H}$, there is a unique sequence of complex numbers $(a_n)_{n\geq 0}$ such that
\[
f=\sum_{n=0}^{\infty}a_n \chi_n,
\]
where the series is norm convergent and
\[
c_1 \, \sum_{n=0}^{\infty}  |a_n|^2 \|\chi_n\|_{\mathcal{H}}^2 \leq \|f\|_{\mathcal{H}}^2 \leq c_2 \, \sum_{n=0}^{\infty} |a_n|^2 \|\chi_n\|_{\mathcal{H}}^2.
\]
The constants $c_1$ and $c_2$ are universal for $\mathcal{H}$ and,
throughout the paper, $c_1$ and $c_2$ refer to these values. In
particular, we are interested in the case $c_1=c_2=1$, i.e. when
$(\chi_n/\|\chi_n\|_{\mathcal{H}})_{n \geq 0}$ is an orthonormal
basis. In that case, $\mathcal H$ is known as a \emph{weighted Hardy
space}, see \cite[page 14]{Cowen} or \cite{Shields74}.

The axioms provided above can also be slightly generalized. We may assume that there is a sequence of strictly positive weights $(w_n)_{n \geq 0}$, with
\begin{equation} \label{E:lim1}
\lim_{n \to \infty} \frac{w_{n+1}}{w_{n}} = 1,
\end{equation}
such that $(\chi_n/\sqrt{w_n})_{n \geq 0}$ is a Riesz basis for $\mathcal{H}$. Hence, for each finitely supported sequence of complex numbers $(a_n)_{n \geq 0}$, we have
\[
c_1 \, \sum_{n=0}^{\infty} w_n |a_n|^2 \leq \left\| \sum_{n=0}^{\infty} a_n \chi_n \right\|_{\mathcal{H}}^{2} \leq c_2 \, \sum_{n=0}^{\infty} w_n |a_n|^2.
\]
In the axioms presented above, we have $w_n = \|\chi_n\|_{\mathcal{H}}^2$. For the simplicity of notations, we will keep $w_n$ throughout the paper.

In the following, the set of all polynomials of degree less or equal to $n$ is denoted by $\mathcal{P}_n$, and the set of all polynomials by $\mathcal{P}$. The family of all functions which are analytic on a disc larger than $\mathbb{D}$ is denoted by $\mbox{Hol}(\overline{\mathbb{D}})$. The above set of axioms have several immediate consequences. We collect these properties in the following.

\begin{enumerate}[$(a)$]
\item
Since $(\chi_n/\sqrt{w_n})_{n \geq 0}$ is a Riesz basis for $\mathcal{H}$, if the infinite sequence $(a_n)_{n \geq 0}$ is such that $\sum_{n=0}^{\infty} w_n |a_n|^2 < \infty$, then $\sum_{n=0}^{\infty} a_n \chi_n \in \mathcal{H}$. In particular, in the light of \eqref{E:lim1}, this happens whenever
\[
\limsup_{n \to \infty} |a_n|^{1/n} < 1.
\]
In the language of function spaces, this means that
$\mbox{Hol}(\overline{\mathbb{D}}) \subset \mathcal{H}$. In fact,
for this property, we only need the part of axiom (i) which says
that $\limsup_{n\to \infty}\frac{w_{n+1}}{w_n}\leq 1$.
\item The previous property also implies that $\mathcal{P}$ is a dense subspace of $\mathcal{H}$.

\item For each $z \in \mathbb{D}$, we have
\begin{eqnarray*}
\left| \sum_{n=0}^{\infty} a_n z^n \right|^2 &\leq& \sum_{n=0}^{\infty} w_n |a_n|^2 \times \sum_{n=0}^{\infty} \frac{|z|^{2n}}{w_n} \\
&\leq& \left( \frac{1}{c_1} \sum_{n=0}^{\infty} \frac{|z|^{2n}}{w_n} \right) \left\| \sum_{n=0}^{\infty} a_n \chi_n \right\|_{\mathcal{H}}^{2}.
\end{eqnarray*}
Note that the series $\sum_n |z|^{2n}w_n^{-1}$ is convergent for any
$z\in\mathbb D$ because $\liminf_{n\to
+\infty}\frac{w_{n+1}}{w_n}\geq 1$. Thus the evaluation functional
\[
\begin{array}{cccc}
\mathcal{H} & \longrightarrow & \mathbb{C}\\
f & \longmapsto & f(z)
\end{array}
\]
is bounded on $\mathcal{H}$ and by the Riesz representation theorem,
there is $k_{z}^{\mathcal{H}} \in \mathcal{H}$, the so called {\em
reproducing kernel} of $\mathcal{H}$, such that
\[
f(z) = \langle f, k_{z}^{\mathcal{H}}\rangle_{\mathcal{H}}, \qquad (f \in \mathcal{H}, \, z \in \mathbb{D}).
\]
In the particular case $c_1=c_2$ (which corresponds to the case when $(\chi_n/\|\chi_n\|_{\mathcal{H}})_{n \geq 0}$ is an orthonormal basis), we have
\[
k_{\lambda}^{\mathcal{H}} =\sum_{n=0}^{\infty} \frac{\bar{\lambda}^n}{\|\chi_n\|_{\mathcal{H}}^{2}} \,\, \chi_n.
\]

\item The shift operator
\[
\begin{array}{cccc}
S: & \mathcal{H} & \longrightarrow & \mathcal{H}\\
 & f & \longmapsto & zf
\end{array}
\]
is well-defined and bounded on $\mathcal{H}$. In fact, it is straightforward to verify that
\begin{equation} \label{E:normS}
\frac{c_1}{c_2} \, \sup_{n \geq 0} \frac{w_{n+1}}{w_n} \leq \|S\|^2_{\mathcal{L}(\mathcal{H})} \leq \frac{c_2}{c_1} \, \sup_{n \geq 0} \frac{w_{n+1}}{w_n}.
\end{equation}
\end{enumerate}

A function $\phi$ is called a {\em multiplier} of $\mathcal{H}$ if
for each $f \in \mathcal{H}$ we have $\phi f \in \mathcal{H}$. The
set of all multipliers of $\mathcal{H}$ (which is an algebra) will
be denoted by $\mathcal{M}(\mathcal{H})$. It is well known that
$\mathcal{M}(\mathcal{H}) \subset \mathcal{H} \cap
H^\infty(\mathbb{D})$, and the multiplication operator
\[
\begin{array}{cccc}
M_\phi : & \mathcal{H} & \longrightarrow & \mathcal{H}\\
 & f & \longmapsto & \phi f
\end{array}
\]
is well-defined and bounded on $\mathcal{H}$. Hence, we equip
$\mathcal{M}(\mathcal{H})$ with the operator norm. The above
observation about the shift operator implies that each polynomial is
indeed a multiplier of $\mathcal{H}$ because for $f\in\mathcal H$
and $p\in\mathcal P$, we have $pf=p(S)f$.

An important example of Hilbert space which fulfills our hypothesis (i) and (ii) is the family of {\em Dirichlet-type spaces} $\mathcal{D}_\alpha$, defined as the space of analytic functions $f(z)=\sum_{n\geq 0}a_n z^n$ on $\mathbb{D}$ satisfying
\[
\|f\|_{\mathcal{D}_\alpha}^2 := \sum_{n=0}^{\infty} (n+1)^\alpha |a_n|^2  < \infty.
\]
This family by itself includes celebrated spaces like the classical
{\em Dirichlet space} ($\alpha = 1$), the {\em Hardy space}
($\alpha=0$) and the {\em Bergman space} ($\alpha=-1$). Therefore,
the results presented below work in particular in all these
classical situations. Note that in \cite{aleman}, Aleman studied
 the cyclicity problem in a class of spaces which fall in the class we
 introduced in this paper.

\subsection{Terminology}

A function $f \in \mathcal{H}$ is said to be \emph{cyclic} if the polynomial multiples of the function form a dense subspace of $\mathcal{H}$. The density of polynomials in $\mathcal{H}$ immediately ensures that the constant function $1$ is cyclic. Therefore, the boundedness of the shift operator on $\mathcal{H}$ yields that an element $f \in \mathcal{H}$ is cyclic if and only if there exists a sequence of polynomials $(p_n)_{n \geq 1}$ such that
\[
\| p_n f -1 \|_{\mathcal{H}} \longrightarrow 0
\]
as $n$ goes to $\infty$. As one of the basic necessary conditions, since the point evaluations are bounded,  a cyclic function cannot have any zeros inside $\mathbb{D}$. That is why in the following we consider functions which live on a disc containing $\mathbb{D}$, with no zeros on $\mathbb{D}$, and then study the effect of its zeros on $\mathbb{T}$ or even outside $\overline{\mathbb{D}}$.

Adopting some concepts from \cite{BCLSS1}, we define the {\em $n$-th optimal norm} and the {\em optimal norm} of $f$ respectively by
\[
\epsilon_n = \min_{p \in \mathcal{P}_n} \|pf-1\|_{\mathcal{H}}
\quad \mbox{and} \quad
\epsilon = \lim_{n \to \infty} \epsilon_n,
\]
where we recall that $\mathcal{P}_n$ denotes the family of all
polynomials of degree less or equal to $n$. Note that
$(\epsilon_n)_n$ is a decreasing sequence, whence, the limit
$\epsilon$ exists and is well-defined.  The {\em $n$-th optimal
approximant} to $1/f$ is any polynomial $p_n^*\in \mathcal{P}_n$
satisfying $\|p_n^* f -1\|_{\mathcal{H}} = \epsilon_n$ (actually, we
will shortly see that $p_n^*$ is uniquely determined). With this new
language, $f \in \mathcal{H}$ is cyclic if and only if its optimal
norm is zero, i.e. $\epsilon = 0$.

\subsection{Statement of results}

As the first step, in Theorem \ref{T1}, we show that optimal
approximants exist and they are uniquely determined as the solution
to a linear system. We dedicate Section \ref{section2} to its proof.

Then, in Section \ref{cyc-hol0}, we study the cyclicity for functions which are holomorphic on a disc bigger than the open unit disc. Let $\mathfrak{H}$ denote the subclass of functions in $\mbox{Hol}(\overline{\mathbb{D}})$ which have no zeros inside $\mathbb{D}$. Our main result, Theorem \ref{T2},  concerns the characterization of cyclicity of the elements of $\mathfrak{H}$. In short, among several other intermediate results, we show that every function $f\in\mathfrak{H}$ is cyclic in $\mathcal{H}$ if and only if
\[
\sum_{k=0}^\infty \frac{1}{w_k} = \infty.
\]

Note that in the case when $\mathcal{H}=\mathcal{D}_\alpha$, then $w_k=(k+1)^{\alpha}$. Thus, Theorem~\ref{T2} says that each function $f\in\mathfrak{H}$ is cyclic in $\mathcal{D}_\alpha$ if and only if $\alpha\leq 1$ and we recover a result of Brown--Shields.

The proof will be based on reducing the problem to studying the function $f(z)=1-z$, in the same spirit as in \cite{BCLSS1}, finding explicitly both the optimal polynomials and optimal norms for this function and then infering the result for other functions from this one.

As in \cite{BCLSS1}, one show that comparable speeds of decay apply to the optimal norms for all functions in $\mathfrak{H}$ whose zeros on the boundary are simple and with at least one boundary zero. For higher multiplicity of the zeros it is not known whether slower decay is possible, although the lower bounds still apply. In Section \ref{section4}, we present some remarks on this, leading to a sharpening of Theorem \ref{T2}, and we introduce two problems on the algebraic properties of the spaces $\mathcal{H}$.

Finally, in Section \ref{section5}, we make some observations on the
distribution of zeros of optimal polynomials and we address some
problems that are left open in this context.

\section{Existence and uniqueness}\label{section2}

The following result ensures that the $n$-th optimal approximant always exists and, moreover, it is uniquely determined.

\begin{thm} \label{T1}
Let $f \in \mathcal{H} \setminus \{0\}$, and let $n \geq 0$. Then the $n$-th optimal approximant to $1/f$ uniquely exists, and is obtained via the solution to the linear system
\[
Ma=b,
\]
where $M=[m_{ij}]$ is a matrix with entries
\[
m_{ij} = \langle \chi_j f , \chi_i f \rangle_{\mathcal{H}}, \qquad  (i,j=0,...,n),
\]
$b=[b_i]$ is a vector with entries
\[
b_i= \langle 1,\chi_i f \rangle_{\mathcal{H}}, \qquad  (i=0,...,n),
\]
and the entries of the vector $a=[a_i]$ are the coefficients of the $n$-th optimal approximant
$p^*_n(z)= \sum_{i=0}^n a_i z^i$.
\end{thm}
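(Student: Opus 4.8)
The plan is to recognize this as a standard least-squares / Hilbert space projection problem and solve it via the orthogonality principle. The quantity $\epsilon_n = \min_{p \in \mathcal{P}_n} \|pf - 1\|_{\mathcal{H}}$ is the distance from the fixed vector $1 \in \mathcal{H}$ to the subspace $f\mathcal{P}_n := \{pf : p \in \mathcal{P}_n\}$. Since each polynomial is a multiplier of $\mathcal{H}$ (as noted in the preamble, $pf = p(S)f \in \mathcal{H}$), this subspace is genuinely contained in $\mathcal{H}$. The key structural observation is that $f\mathcal{P}_n$ is finite-dimensional, spanned by $\{\chi_0 f, \chi_1 f, \dots, \chi_n f\}$, hence closed. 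The classical Hilbert space projection theorem then guarantees a unique element $g^* \in f\mathcal{P}_n$ realizing the minimal distance to $1$.

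\textbf{First step: reduce existence/uniqueness to linear independence of the spanning set.} I would first verify that $\{\chi_j f\}_{j=0}^n$ is a linearly independent family, so that the minimizing $g^* = \sum_{j} a_j \chi_j f$ corresponds to a \emph{unique} polynomial $p_n^*(z) = \sum_j a_j z^j$. This is where the hypothesis $f \neq 0$ enters: if $\sum_j a_j \chi_j f = 0$ in $\mathcal{H}$ with not all $a_j$ zero, then $q(z) f(z) = 0$ for the nonzero polynomial $q = \sum a_j \chi_j$, forcing $f \equiv 0$ as an analytic function on $\mathbb{D}$ (since $qf$ vanishing in $\mathcal{H}$ means it vanishes as an analytic function, and a nonzero analytic function times a nonzero polynomial cannot vanish identically). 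This contradiction establishes independence, so the projection is realized by a genuinely unique vector of coefficients.

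\textbf{Second step: derive the normal equations via orthogonality.} The minimizer is characterized by the condition that the residual $p_n^* f - 1$ is orthogonal to the approximating subspace $f\mathcal{P}_n$, i.e. $\langle p_n^* f - 1, \chi_i f \rangle_{\mathcal{H}} = 0$ for each $i = 0, \dots, n$. Writing $p_n^* = \sum_{j=0}^n a_j \chi_j$ and expanding by linearity in the first slot gives
\[
\sum_{j=0}^n a_j \langle \chi_j f, \chi_i f \rangle_{\mathcal{H}} = \langle 1, \chi_i f \rangle_{\mathcal{H}}, \qquad (i = 0, \dots, n),
\]
which is precisely the system $Ma = b$ with $m_{ij} = \langle \chi_j f, \chi_i f \rangle_{\mathcal{H}}$ and $b_i = \langle 1, \chi_i f \rangle_{\mathcal{H}}$. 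Conversely, any solution of this system yields a residual orthogonal to the subspace, hence the unique minimizer; the Gram matrix $M$ of the linearly independent family $\{\chi_j f\}$ is positive definite and therefore invertible, which re-confirms that $a = M^{-1}b$ exists and is unique.

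\textbf{Expected main obstacle.} The genuinely nontrivial point is the linear independence of $\{\chi_j f\}_{j=0}^n$ inside $\mathcal{H}$ — that is, that distinct polynomials produce distinct elements $pf$ of the Hilbert space. Everything else is the routine projection-theorem machinery. This step is secured by the fact that convergence in $\mathcal{H}$ entails pointwise convergence (the evaluation functionals are bounded, property $(c)$), so the identity $\sum_j a_j \chi_j f = 0$ in $\mathcal{H}$ forces $q f \equiv 0$ pointwise on $\mathbb{D}$; since $f \not\equiv 0$, the zero set of $f$ is discrete, so $q$ must vanish on a set with a limit point and hence $q \equiv 0$. With independence in hand, the invertibility of $M$ and the existence and uniqueness of $p_n^*$ follow immediately.
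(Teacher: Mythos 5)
Your proof is correct and takes essentially the same route as the paper: orthogonal projection of $1$ onto the finite-dimensional (hence closed) subspace $V_n(f)=\{pf : p\in\mathcal{P}_n\}$, followed by the orthogonality characterization of the minimizer, which yields the normal equations $Ma=b$. You in fact spell out more carefully the one point the paper only asserts in passing (``as $f$ is not identically zero, $p_n^*$ is uniquely determined''), namely the linear independence of $\{\chi_j f\}_{j=0}^{n}$ via bounded point evaluations and the identity theorem, which also gives the positive definiteness of the Gram matrix $M$.
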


\begin{proof}
Existence and uniqueness of the optimal approximant is an easy consequence of the Hilbert structure. Fix $f \in \mathcal{H} \setminus \{0\}$, and consider the set
\[
V_n (f) = \{pf : p \in \mathcal{P}_n\}.
\]
Since each polynomial is a multiplier of $\mathcal{H}$, the collection $V_n(f)$ is a closed (finite dimensional) Hilbert subspace of $\mathcal{H}$. Hence, the  orthogonal projection $\Pi_n$ from $\mathcal{H}$ onto $V_n(f)$ is well-defined. Therefore, there is a unique element $p_n^*f\in V_n$ satisfying $p_n^*f= \Pi_n(1)$ and, by the basic principles of inner product spaces, this element is such that $\|p_n^*f-1\|_{\mathcal{H}} = \epsilon_n$. As $f$ is not identically zero, $p^*_n$ is uniquely determined.

To verify the proposed linear system, note that the optimality of $p^*_n$ is equivalent with $p^*_nf - 1$ being orthogonal to $qf$ for all $qf \in V_n(f)$. This is fulfilled if and only if
\[
\langle p^*_n f -1, \chi_{i}f \rangle_{\mathcal{H}} =0, \qquad (0 \leq i \leq n).
\]
Put the independent term $\langle 1,\chi_{i}f \rangle_{\mathcal{H}}$ on the right hand side.  Then decompose $p^*_n$ as a sum of monomials $p^*_n  = \sum_{j=0}^n a_j \chi_{j}$ and the result follows.

Note that if $(\chi_n)_{n \geq 0}$ is an orthogonal sequence (equivalently, if $c_1=c_2=1$),  then
\[
\langle 1,\chi_{i}f \rangle_{\mathcal{H}} = \langle 1,f \rangle_{\mathcal{H}} \, \delta_{i0} =  w_0\overline{f(0)} \, \delta_{i0},
\]
where $\delta_{i0} = 1$ if $i=0$, and $\delta_{i0}=0$ otherwise.
\end{proof}

\section{Cyclicity in RKHS} \label{cyc-hol0}

Now we are ready to state and prove our main result.

\begin{thm} \label{T2}
The following are equivalent:
\begin{enumerate}[$(a)$]
\item Every function $f\in\mathfrak{H}$ is cyclic in $\mathcal{H}$.
\item There exists $f\in\mathfrak{H}$ such that $f$ is cyclic in $\mathcal{H}$ and, moreover,   $f(e^{i \theta})=0$ at some point $e^{i \theta} \in \TT$.
\item \[
\sum_{k=0}^\infty \frac{1}{w_k} = \infty.
\]
\end{enumerate}
\end{thm}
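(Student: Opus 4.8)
The plan is to prove the cyclic chain of implications $(a) \Rightarrow (b) \Rightarrow (c) \Rightarrow (a)$. The implication $(a) \Rightarrow (b)$ is immediate: the function $f(z)=1-z$ belongs to $\mathfrak{H}$, being entire with its only zero at $z=1\in\TT$; so if every element of $\mathfrak{H}$ is cyclic, then in particular $1-z$ is cyclic and vanishes at a point of $\TT$, which is exactly $(b)$.

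For $(b) \Rightarrow (c)$ I would argue by contraposition. Suppose $\sum_k 1/w_k < \infty$. The pointwise estimate $|f(z)|^2 \le \frac{1}{c_1}\big(\sum_n |z|^{2n}/w_n\big)\|f\|_{\mathcal{H}}^2$ recorded above, together with $\sum_n |z|^{2n}/w_n \le \sum_n 1/w_n <\infty$ for every $z\in\DD$, shows that the evaluation functionals are uniformly bounded on $\DD$. Consequently each $f\in\mathcal{H}$ extends continuously to $\overline{\DD}$ and $\mathcal{H}$-convergence forces uniform convergence on $\overline{\DD}$. Now if some $f\in\mathfrak{H}$ with $f(e^{i\theta})=0$ were cyclic, pick polynomials $p_n$ with $\|p_nf-1\|_{\mathcal{H}}\to0$; evaluating at $e^{i\theta}$ gives $p_n(e^{i\theta})f(e^{i\theta})-1=-1\to0$, a contradiction. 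Hence $(b)$ fails, proving $(b)\Rightarrow(c)$.

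The substance of the theorem is $(c)\Rightarrow(a)$, and here I would first settle $f(z)=1-z$ and then bootstrap. Writing a candidate $p=\sum_{j=0}^n a_j\chi_j$ and expanding, the coefficients of $pf-1$ telescope: setting $d_0=a_0-1$, $d_k=a_k-a_{k-1}$ for $1\le k\le n$, and $d_{n+1}=-a_n$, one finds $\sum_{k=0}^{n+1} d_k=-1$. Using the Riesz-basis bounds, minimizing $\|pf-1\|_{\mathcal{H}}^2$ reduces to the constrained problem $\min\{\sum_{k=0}^{n+1} w_k|d_k|^2 : \sum_k d_k=-1\}$, solved by Cauchy--Schwarz with optimizer $d_k\propto 1/w_k$ and optimal value $\big(\sum_{k=0}^{n+1}1/w_k\big)^{-1}$. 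This yields
\[
c_1 \Big(\sum_{k=0}^{n+1}\tfrac{1}{w_k}\Big)^{-1} \le \epsilon_n(1-z)^2 \le c_2 \Big(\sum_{k=0}^{n+1}\tfrac{1}{w_k}\Big)^{-1},
\]
with equality throughout when $c_1=c_2=1$, together with an explicit formula for the optimal approximant $p_n^*$. Letting $n\to\infty$, the optimal norm of $1-z$ vanishes precisely when $\sum_k 1/w_k=\infty$, so under $(c)$ the function $1-z$ is cyclic. I expect this exact evaluation of the optimal norm to be the technical heart of the argument.

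It remains to upgrade cyclicity of $1-z$ to cyclicity of every $f\in\mathfrak{H}$, assembled from three soft facts. First, rotation invariance: for $\zeta\in\TT$ the operator $(T_\zeta f)(z)=f(\zeta z)$ is bounded and invertible on $\mathcal{H}$ (it preserves the weights $w_n$ and multiplies coefficients by the unimodular $\zeta^n$), maps $\mathcal{P}_n f$ onto $\mathcal{P}_n(T_\zeta f)$ and fixes $1$; hence it carries cyclic vectors to cyclic vectors, and applied to $1-z$ it shows each $1-\bar\zeta z$ is cyclic. Second, a product lemma: if $g_1\in\mathcal{H}$ is cyclic and $g_2\in\mathcal{M}(\mathcal{H})$ is cyclic, then $g_1g_2$ is cyclic, for choosing polynomials $p_k$ with $p_kg_1\to1$ and applying the bounded operator $M_{g_2}$ gives $p_k g_1 g_2\to g_2$, so $g_2\in[g_1g_2]$, whence the closed shift-invariant space $[g_1g_2]$ contains $[g_2]=\mathcal{H}$; iterating over factors and powers makes every product $\prod_j(1-\bar\zeta_j z)^{m_j}$ with $\zeta_j\in\TT$ cyclic. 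Third, factorization: an arbitrary $f\in\mathfrak{H}$ has finitely many zeros in its disc of analyticity and none in $\DD$, so collecting the boundary zeros into $B(z)=\prod_j(1-\bar\zeta_j z)^{m_j}$, the quotient $g=f/B$ is analytic and zero-free on a neighbourhood of $\overline{\DD}$; thus $1/g\in\mbox{Hol}(\overline{\DD})\subset\mathcal{H}$ and $g$ is an invertible multiplier, so $M_g$ is a homeomorphism with $M_g[B]=[gB]=[f]$, and $f$ is cyclic iff $B$ is. The only genuine difficulty lurking here concerns higher-multiplicity boundary zeros, but it affects only the decay rate of $\epsilon_n$, discussed in Section~\ref{section4}, and not cyclicity itself, which the invariant-subspace argument delivers for all multiplicities.
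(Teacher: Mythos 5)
Your overall architecture coincides with the paper's: the cycle $(a)\Rightarrow(b)\Rightarrow(c)\Rightarrow(a)$, the reduction of $(c)\Rightarrow(a)$ to $f(z)=1-z$ via rotations and products, and the telescoping constrained minimization $\min\{\sum_k w_k|d_k|^2 : \sum_k d_k=-1\}$ solved by Cauchy--Schwarz, which is exactly the content of Theorem \ref{T3} and Corollary \ref{corap}; your rotation operator $T_\zeta$ and product lemma are the paper's $U_\lambda$ (Lemma \ref{rotation}) and Lemma \ref{lemprodc} (your version, which asks only the second factor to be a multiplier, is in fact slightly more flexible). Where you genuinely depart from the paper is in $(b)\Rightarrow(c)$. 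The paper factors the cyclic function as $g\prod_k(1-\bar\lambda_k z)$, strips off $g$ using $1/g\in\mathcal{M}(\mathcal{H})$, rotates to reduce to $1-z$, and then invokes the sharp lower bound of Corollary \ref{corap}. You instead argue by contraposition with a soft embedding: if $\sum_k 1/w_k<\infty$, point evaluations are uniformly bounded on $\DD$, so every element of $\mathcal{H}$ extends continuously to $\overline{\DD}$ and norm convergence forces uniform convergence there; hence a function vanishing at a boundary point cannot be cyclic. This is correct and shorter --- it needs neither the factorization nor the norm computation for this direction --- though it forfeits the quantitative information the paper's route carries. One small step should be made explicit: uniform boundedness of evaluations alone does not yield continuous extension; you need either the density of polynomials in $\mathcal{H}$ (recorded in the paper) to pass to uniform limits, or, more directly, the Cauchy--Schwarz estimate $\sum_n|a_n|\leq(\sum_n w_n|a_n|^2)^{1/2}(\sum_n 1/w_n)^{1/2}$, which embeds $\mathcal{H}$ continuously into the Wiener algebra $\mathcal{A}(\TT)$.

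The one genuine gap is in your factorization step for $(c)\Rightarrow(a)$: you assert that the zero-free factor $g=f/B$ ``is an invertible multiplier,'' apparently as a consequence of $1/g\in\mbox{Hol}(\overline{\DD})\subset\mathcal{H}$. Membership in $\mathcal{H}$ does not confer multiplier status; what is needed is the inclusion $\mbox{Hol}(\overline{\DD})\subset\mathcal{M}(\mathcal{H})$, applied to both $g$ and $1/g$. This is precisely the paper's Lemma \ref{Lemfinmulit}, and it is not free: the paper first proves $\rho(S)=1$ (Lemma \ref{Lemfinmulit0}), using the hypothesis \eqref{E:lim1} on the weights, and then sums $\phi(S)=\sum_k a_k S^k$ in operator norm, exploiting the geometric decay of the Taylor coefficients of $\phi\in\mbox{Hol}(\overline{\DD})$. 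Without some such argument your $M_g$ is not known to be bounded, let alone a homeomorphism, and the identity $[f]=M_g[B]$ collapses. Once that lemma is supplied, the rest of your argument --- including your (correct) remark that higher multiplicity of boundary zeros affects only the decay rates of Section \ref{section4}, not cyclicity --- goes through.
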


The proof has several steps.

\subsection{Reductions of the problem}

We can divide our function $f$ into simple pieces and study the cyclicity of each one independently. This action is justified by the following observation.

\begin{lem} \label{lemprodc}
The product of two cyclic multipliers is cyclic.
\end{lem}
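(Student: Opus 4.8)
The plan is to work with the characterization of cyclicity recorded just above the statement: a multiplier $g$ is cyclic precisely when the polynomial multiples $\mathcal{P}g = \{pg : p \in \mathcal{P}\}$ are dense in $\mathcal{H}$, equivalently when there exist polynomials $(p_n)_{n \geq 1}$ with $\|p_n g - 1\|_{\mathcal{H}} \to 0$. Let $\phi$ and $\psi$ be two cyclic multipliers. Since $\mathcal{M}(\mathcal{H})$ is an algebra, $\phi\psi$ is again a multiplier, and $\phi\psi = (\phi\psi)\cdot 1 \in \mathcal{H}$, so it is meaningful to ask whether it is cyclic. I would establish cyclicity by proving $\overline{\mathcal{P}\phi\psi} = \mathcal{H}$.

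First I would use the cyclicity of $\psi$ to produce polynomials $(q_m)_{m \geq 1}$ with $\|q_m\psi - 1\|_{\mathcal{H}} \to 0$. The engine of the proof is then the boundedness of the multiplication operators: for any polynomial $p$, the operator $M_{p\phi} = M_p M_\phi$ is bounded on $\mathcal{H}$, so applying it to the convergent sequence $q_m\psi \to 1$ yields
\[
(pq_m)\,\phi\psi = M_{p\phi}(q_m\psi) \longrightarrow M_{p\phi}(1) = p\phi
\]
in $\mathcal{H}$. As $pq_m$ is a polynomial, the left-hand side lies in $\mathcal{P}\phi\psi$, whence $p\phi \in \overline{\mathcal{P}\phi\psi}$ for every $p \in \mathcal{P}$.

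This shows $\mathcal{P}\phi \subseteq \overline{\mathcal{P}\phi\psi}$, and taking closures gives $\overline{\mathcal{P}\phi} \subseteq \overline{\mathcal{P}\phi\psi}$. Finally I would invoke the cyclicity of $\phi$, which states $\overline{\mathcal{P}\phi} = \mathcal{H}$, to conclude $\overline{\mathcal{P}\phi\psi} = \mathcal{H}$; that is, $\phi\psi$ is cyclic.

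The argument is short, and the only point requiring care is the interchange of the limit with multiplication, which is exactly where the multiplier hypothesis is used: it guarantees that each $M_{p\phi}$ is bounded, so that norm convergence is preserved. The main (mild) subtlety to flag is that the two hypotheses enter asymmetrically — cyclicity of $\psi$ is used to approximate the constant $1$, while cyclicity of $\phi$ is used at the end to exhaust all of $\mathcal{H}$ — and that the step $M_{p\phi} = M_p M_\phi$ relies on $\phi$ itself (not merely $\phi\psi$) being a multiplier, which holds by hypothesis.
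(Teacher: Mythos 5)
Your proof is correct and rests on the same mechanism as the paper's: cyclicity of one factor gives $q_m\psi \to 1$, boundedness of multiplication by $p\phi$ (a polynomial times a multiplier) transfers this to $(pq_m)\phi\psi \to p\phi$, and cyclicity of the other factor finishes the job. The paper packages this as an explicit $\varepsilon$/triangle-inequality estimate with a diagonal sequence $p_n q_{m(n)}$, while you phrase it as a closure argument, $\overline{\mathcal{P}\phi\psi} \supseteq \overline{\mathcal{P}\phi} = \mathcal{H}$, but the mathematical content is identical.
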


\begin{proof}
If $f$ and $g$ are two cyclic multipliers, then there exist
sequences of polynomials $(p_n)_{n \geq 1}$ and $(q_m)_{m \geq 1}$,
such that $p_n f -1 \longrightarrow 0$ and $q_m g -1 \longrightarrow
0$ in the norm of $\mathcal{H}$ as $n$ and $m$ go to $\infty$,
respectively. Fix $\varepsilon>0$, and choose $n$ such that $\|p_n
f-1\|_{\mathcal{H}} \leq \varepsilon$. Then, for this $n$, choose
$m=m(n)$ large enough to guarantee that $\|q_m g-1\|_{\mathcal{H}}
\leq \varepsilon / \|p_n f\|_{\mathcal{M}(\mathcal{H})}$. Now, the
sequence $p_n q_{m(n)}$ of polynomials proves that $fg$ is cyclic.
To see this, by the triangular inequality, we have
\begin{eqnarray*}
\|p_n q_{m(n)} fg - 1\|_{\mathcal{H}} &\leq&  \|p_n q_{m(n)} fg - p_n f\|_{\mathcal{H}} + \|p_n f -1\|_{\mathcal{H}}\\
&\leq&  \|p_n f\|_{\mathcal{M}(\mathcal{H})} \, \|q_{m(n)} g - 1\|_{\mathcal{H}} + \|p_n f -1\|_{\mathcal{H}} \leq 2\varepsilon.
\end{eqnarray*}
\end{proof}

To further reduce the problem, fix $\lambda \in \mathbb{T}$, and consider the mapping $U_\lambda: \mathcal{H} \longrightarrow \mathcal{H}$ defined by
\[
\sum_{n=0}^{\infty} a_n \chi_n \overset{U_\lambda}{\longmapsto} \sum_{n=0}^{\infty} a_n \lambda^n \chi_n.
\]
Since $(\chi_n/\sqrt{w_n})_{n \geq 0}$ is a Riesz basis for $\mathcal{H}$, then $U_\lambda$ is well-defined and bounded operator on $\mathcal{H}$. Moreover, its inverse is $U_{\bar{\lambda}}$ and we have
\begin{equation} \label{Ulambda}
U_{\lambda} S U_{\bar{\lambda}} = \lambda S.
\end{equation}
Note that, $U_\lambda$ is defined such that
\[
(U_{\lambda} f)(z) = f(\lambda z), \qquad (z \in \mathbb{D}).
\]

\begin{lem} \label{rotation}
Let $\lambda \in \mathbb{T}$. If $f \in \mathcal{H}$ is cyclic, then so is $U_\lambda f$.
\end{lem}

\begin{proof}
Let $p(z) = \sum_{k=0}^{n} a_k z^k$. Then, by \eqref{Ulambda},
\begin{eqnarray*}
U_{\lambda} M_p U_{\bar{\lambda}} &=& U_{\lambda} \left( \sum_{k=0}^{n} a_k S^k \right)  U_{\bar{\lambda}}
= \sum_{k=0}^{n} a_k U_{\lambda} S^k U_{\bar{\lambda}}\\
&=& \sum_{k=0}^{n} a_k (U_{\lambda} S U_{\bar{\lambda}})^k
= \sum_{k=0}^{n} a_k \lambda^k S^k = M_q,
\end{eqnarray*}
where $q$ is the polynomial $q(z) = \sum_{k=0}^{n} a_k \lambda^k z^k$. Since $f$ is cyclic, there is a sequence $(p_j)_{j \geq 1}$ such that $p_jf \longrightarrow 1$ in $\mathcal{H}$.  Therefore, we also have
\[
U_{\lambda} M_{p_j} f \overset{\mathcal{H}}{\longrightarrow} U_{\lambda}1=1
\]
as $j \to \infty$. Since $U_{\lambda} M_{p_j} = M_{q_j} U_{\lambda}$, we deduce
\[
M_{q_j} U_{\lambda} f \overset{\mathcal{H}}{\longrightarrow} 1.
\]
This means that $U_{\lambda} f$ is cyclic in $\mathcal{H}$.

\end{proof}

Given $f\in\mathfrak{H}$, we write it as
\begin{equation} \label{E:zero-decomp}
f(z) = g(z) \prod_{k=1}^{n} (1-\bar{\lambda}_kz)
\end{equation}
where $\lambda_k \in \mathbb{T}$ and $g$ has no zeros on $\overline{\mathbb{D}}$. Therefore, in the light of lemmas \ref{lemprodc} and \ref{rotation}, the problem reduces to characterizing the cyclicity of $f(z)=1-z$ and functions with no zeros in the closed unit disc.

\subsection{No zeros on $\overline{\mathbb{D}}$}

We show that if there is no zeros on $\overline{\mathbb{D}}$, then everything works well. This entails to calculating the spectral radius of $S$ which is, by itself, an interesting result. We remind that $\sigma(S)$ and $\rho(S)$ represent respectively the spectrum and the spectral radius of $S$.

\begin{lem} \label{Lemfinmulit0}
We have $\rho(S)=1$.
\end{lem}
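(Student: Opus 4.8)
The plan is to establish the two inequalities $\rho(S)\le 1$ and $\rho(S)\ge 1$ separately, using the spectral radius formula $\rho(S)=\lim_{n\to\infty}\|S^n\|^{1/n}$. The key observation is that $S^n$ is the multiplication operator $M_{\chi_n}$, so $S^n$ maps $\sum_k a_k\chi_k$ to $\sum_k a_k\chi_{k+n}$, and the Riesz basis inequalities give a clean two-sided estimate on $\|S^n\|$ purely in terms of the weights $(w_k)_{k\ge 0}$.

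For the upper bound, I would first estimate $\|S^n\|^2$. Applying the Riesz basis equivalence to $f=\sum_k a_k\chi_k$ and to $S^nf=\sum_k a_k\chi_{k+n}$, one gets
\[
\|S^n f\|_{\mathcal{H}}^2 \le c_2\sum_{k\ge 0} w_{k+n}|a_k|^2 \le c_2\left(\sup_{k\ge 0}\frac{w_{k+n}}{w_k}\right)\sum_{k\ge 0} w_k|a_k|^2 \le \frac{c_2}{c_1}\left(\sup_{k\ge 0}\frac{w_{k+n}}{w_k}\right)\|f\|_{\mathcal{H}}^2,
\]
so that $\|S^n\|^2\le (c_2/c_1)\sup_{k}(w_{k+n}/w_k)$. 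Since the constants $c_1,c_2$ are fixed, taking $n$-th roots kills them in the limit, and the whole question reduces to showing $\limsup_{n\to\infty}\big(\sup_k w_{k+n}/w_k\big)^{1/(2n)}\le 1$. The symmetric lower estimate $\|S^n\|^2\ge (c_1/c_2)\sup_k(w_{k+n}/w_k)\ge (c_1/c_2)(w_n/w_0)$ handles $\rho(S)\ge 1$ provided $(w_n/w_0)^{1/(2n)}\to 1$, which again is a statement about the growth rate of the weights.

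The crux of the argument — and the main obstacle — is therefore to deduce from the hypothesis $\lim_{n\to\infty} w_{n+1}/w_n = 1$ of \eqref{E:lim1} that the weight sequence grows subexponentially in the precise sense needed, namely
\[
\lim_{n\to\infty}\left(\sup_{k\ge 0}\frac{w_{k+n}}{w_k}\right)^{1/n}=1.
\]
The lower bound here is immediate since the supremum dominates the $k=0$ term $w_n/w_0$ and $\lim_n(w_n/w_0)^{1/n}=1$ follows from $w_{n+1}/w_n\to 1$ by a standard Cauchy-product/telescoping argument (writing $\log w_n - \log w_0$ as a sum of terms $\log(w_{j+1}/w_j)$ that tend to $0$, whose Cesàro average tends to $0$). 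The genuinely delicate part is the upper bound, because the supremum over $k$ is taken uniformly and one must rule out the ratios $w_{k+n}/w_k$ being large for some $k$ growing with $n$. I would control this by fixing $\varepsilon>0$, choosing $N$ so that $w_{j+1}/w_j\le 1+\varepsilon$ for all $j\ge N$, and splitting the product $w_{k+n}/w_k=\prod_{j=k}^{k+n-1}(w_{j+1}/w_j)$ according to how many factors have index below $N$; only the first few factors (at most $N$ of them, each bounded by the finite constant $\max_{0\le j<N} w_{j+1}/w_j$) can exceed $1+\varepsilon$, so $\sup_k w_{k+n}/w_k\le C_N(1+\varepsilon)^n$ with $C_N$ independent of $n$. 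Taking $n$-th roots and then $\varepsilon\to 0$ yields the upper bound, completing the proof that $\rho(S)=1$.
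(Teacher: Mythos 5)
Your proposal is correct and follows essentially the same route as the paper: both establish the two-sided estimate $\frac{c_1}{c_2}\sup_{k\geq 0}\frac{w_{k+n}}{w_k}\leq\|S^n\|^2_{\mathcal{L}(\mathcal{H})}\leq\frac{c_2}{c_1}\sup_{k\geq 0}\frac{w_{k+n}}{w_k}$ and then apply the spectral radius formula together with the hypothesis $w_{n+1}/w_n\to 1$. The only difference is that the paper asserts the key implication $\lim_{n\to\infty}\bigl(\sup_{k\geq 0}w_{k+n}/w_k\bigr)^{1/n}=1$ without proof, whereas you supply the telescoping/Ces\`aro argument for the lower bound and the splitting argument (finitely many exceptional factors below the index $N$) for the uniform upper bound, which is a worthwhile filling-in of the detail the paper leaves implicit.
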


\begin{proof}
Equation \eqref{E:normS} extends to
\[
\frac{c_1}{c_2} \, \sup_{n \geq 0} \frac{w_{n+k}}{w_n} \leq \|S^k\|^2_{\mathcal{L}(\mathcal{H})} \leq \frac{c_2}{c_1} \, \sup_{n \geq 0} \frac{w_{n+k}}{w_n} , \qquad (k \geq 1).
\]
The assumption \eqref{E:lim1} now implies that
\[
\rho(S)=\lim_{k \to \infty} \|S^k\|_{\mathcal{L}(\mathcal{H})}^{1/k} = 1.
\]
\end{proof}

\begin{rem}
In fact, we can go further and completely find $\sigma(S)$. Let $\mathcal{H}_w$ be the space $\mathcal{H}$ equipped with the new norm
\[
\left\|\sum_{k=0}^{\infty} a_k z^k \right\|_{\mathcal{H}_w}^2 = \sum_{k=0}^{\infty} w_k |a_k|^2.
\]
By our main assumptions, we have
\[
c_1 \| f \|_{\mathcal{H}_w}^2 \leq \|f\|_{\mathcal{H}}^2 \leq c_2 \|f\|_{\mathcal{H}_w}^2.
\]
Hence, the operator $S$ is similar to the multiplication operator $S_w \in \mathcal{L}(\mathcal{H}_w)$. Now, it is easy to verify that
\[
(S_w^* f)(z) = \sum_{k=0}^{\infty} \frac{w_{k+1}}{w_k} \, a_{k+1} z^k, \qquad (f \in \mathcal{H}_w).
\]
In particular, with each $\lambda \in \mathbb{D}$ and
\[
k_{\lambda}^{\mathcal{H}_w}(z) = \sum_{k=0}^{\infty} \frac{\bar{\lambda}^k}{w_k} \, z^k,
\]
then $k_\lambda^{\mathcal{H}_w}\in\mathcal{H}_w$ and for any $f\in\mathcal{H}$
\[
f(\lambda)=\langle f,k_{\lambda}^{\mathcal{H}_w} \rangle_{\mathcal{H}_w}.
\]
Therefore, we have $S_w^* k_{\lambda}^{\mathcal{H}_w} = \bar{\lambda} k_{\lambda}^{\mathcal{H}_w}$ and thus $\sigma(S)=\sigma(S_w) = \overline{\mathbb{D}}$.
\end{rem}

It was elementary to see that $\mbox{Hol}(\overline{\mathbb{D}}) \subset \mathcal{H}$ (see Section \ref{S:rkhs}, part (a)). However, with some extra care, we can show that $\mbox{Hol}(\overline{\mathbb{D}}) \subset \mathcal{M}(\mathcal{H})$. This observation is crucial in what follows.

\begin{lem} \label{Lemfinmulit}
$\mbox{Hol}(\overline{\mathbb{D}}) \subset \mathcal{M}(\mathcal{H})$.
\end{lem}

\begin{proof}
By Lemma \ref{Lemfinmulit0}, $\rho(S)=1$. Therefore, if $(a_k)_{k \geq 0}$ is any sequence such that
\begin{equation} \label{coefles1}
\limsup_{k \to \infty} |a_k|^{1/k} < 1,
\end{equation}
then
\[
T := \sum_{k=0}^{\infty} a_k S^k \in \mathcal{L}(\mathcal{H}).
\]
In particular, if $\phi \in \mbox{Hol}(\overline{\mathbb{D}})$, then
we can write $\phi(z) = \sum_{k=0}^{\infty} a_k z^k$, where the
Taylor coefficients $(a_k)_{k \geq 0}$ satisfy \eqref{coefles1}.
Now, it remains to note that if $f\in\mathcal H$, then
\[
Tf=\sum_{k=0}^{\infty} a_k S^k f=\sum_{k=0}^{\infty} a_k z^k f=\phi f,
\]
and thus $\phi$ is a multiplier of $\mathcal{H}$.
\end{proof}

Finally, we need the following simple criteria for cyclicity which is a standard result.

\begin{lem} \label{Lemfinmulit00}
Let $f \in \mathcal{M}(\mathcal{H})$ and $1/f \in \mathcal{H}$. Then $f$ is cyclic in $\mathcal{H}$.
\end{lem}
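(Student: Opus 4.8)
The plan is to produce, directly, a sequence of polynomials $(p_n)$ with $\|p_n f - 1\|_{\mathcal{H}} \to 0$; by the reformulation of cyclicity recorded in the Terminology subsection (valid because $1$ is cyclic and $S$ is bounded), this is exactly what we need.

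First I would invoke the density of polynomials in $\mathcal{H}$, which is property $(b)$ of Section~\ref{S:rkhs}. Since $1/f \in \mathcal{H}$ by hypothesis, there exists a sequence of polynomials $(p_n)_{n \geq 1}$ with $\|p_n - 1/f\|_{\mathcal{H}} \to 0$. Next, because $f \in \mathcal{M}(\mathcal{H})$, the multiplication operator $M_f$ is bounded on $\mathcal{H}$, so applying it to the convergence above and using $\|M_f p_n - M_f(1/f)\|_{\mathcal{H}} \le \|M_f\|_{\mathcal{M}(\mathcal{H})}\,\|p_n - 1/f\|_{\mathcal{H}}$ gives $M_f p_n \to M_f(1/f)$ in $\mathcal{H}$. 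Since $M_f p_n = p_n f$ for each $n$, the sequence $(p_n f)$ converges in $\mathcal{H}$ to $M_f(1/f)$.

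It then remains to identify this limit as the constant function $1$, and this is the one point that genuinely requires a word of justification. The multiplier action is nothing but the pointwise product of holomorphic functions, so $M_f(1/f)$ is the analytic function $z \mapsto f(z)\cdot(1/f)(z)$. The hypothesis $1/f \in \mathcal{H} \subset \mbox{Hol}(\mathbb{D})$ forces $f$ to be zero-free on $\mathbb{D}$, whence this pointwise product equals the constant $1$ on all of $\mathbb{D}$; that is, $M_f(1/f) = 1$ as an element of $\mathcal{H}$. Combining this with the previous paragraph yields $\|p_n f - 1\|_{\mathcal{H}} \to 0$, which establishes that $f$ is cyclic. I do not anticipate any real obstacle here: the argument is entirely a consequence of boundedness of $M_f$ and density of $\mathcal{P}$, and the only subtlety, namely that the operator-theoretic limit $M_f(1/f)$ coincides with the pointwise product $1$, is immediate once one recalls that $\mathcal{H}$ consists of genuine analytic functions on which multiplication is defined pointwise.
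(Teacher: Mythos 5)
Your proof is correct and is essentially the paper's own argument: the paper condenses the same idea into the single line $\|pf-1\|_{\mathcal{H}} = \|f(p-1/f)\|_{\mathcal{H}} \leq \|f\|_{\mathcal{M}(\mathcal{H})}\,\|p-1/f\|_{\mathcal{H}}$ for an arbitrary polynomial $p$, and then invokes the density of polynomials in $\mathcal{H}$. The point you single out for justification, namely that $M_f(1/f)$ is the constant function $1$, is exactly the fact the paper uses implicitly when it rewrites $pf-1$ as $f(p-1/f)$, so the two arguments differ only in presentation.
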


\begin{proof}
For any polynomial $p$ we have
\[
\|pf-1\|_{\mathcal{H}} = \|f(p-1/f)\|_{\mathcal{H}} \leq \|f\|_{\mathcal{M}(\mathcal{H})} \|p-1/f\|_{\mathcal{H}}.
\]
Since polynomials are dense in $\mathcal H$, the last sum can be made arbitrarily small. Hence, $f$ is cyclic in $\mathcal{H}$.
\end{proof}

\begin{thm} \label{T4}
Let $f \in \mbox{Hol}(\overline{\mathbb{D}})$, with no zeros on $\overline{\mathbb{D}}$. Then $f$ is cyclic in $\mathcal{H}$.
\end{thm}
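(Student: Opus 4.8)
The plan is to show that both $f$ and $1/f$ belong to $\mbox{Hol}(\overline{\mathbb{D}})$ and then to invoke the two preceding lemmas. By hypothesis $f \in \mbox{Hol}(\overline{\mathbb{D}})$, so Lemma \ref{Lemfinmulit} immediately gives $f \in \mathcal{M}(\mathcal{H})$. The only real content of the argument is therefore to verify that $1/f$ also lies in $\mbox{Hol}(\overline{\mathbb{D}})$.

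First I would use a compactness argument to upgrade the hypothesis ``no zeros on $\overline{\mathbb{D}}$'' to ``no zeros on a strictly larger closed disc''. By assumption $f$ is holomorphic on some open disc $D_R$ with $R>1$, and it is not identically zero, so its zero set is a discrete (hence locally finite) subset of $D_R$. Since none of these zeros lie on the compact set $\overline{\mathbb{D}}$, there is a radius $r$ with $1 < r < R$ such that $f$ is zero-free on the closed disc $\overline{D_r}$. Consequently $1/f$ is holomorphic on the open disc $D_r$, and since $r>1$ this says precisely that $1/f \in \mbox{Hol}(\overline{\mathbb{D}})$.

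Once this is established, I would apply Lemma \ref{Lemfinmulit} a second time, now to $1/f$, to obtain $1/f \in \mathcal{M}(\mathcal{H}) \subset \mathcal{H}$; in particular $1/f \in \mathcal{H}$. Combining $f \in \mathcal{M}(\mathcal{H})$ with $1/f \in \mathcal{H}$, Lemma \ref{Lemfinmulit00} yields at once that $f$ is cyclic in $\mathcal{H}$, which completes the proof.

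The only genuinely substantive point is the compactness step producing a zero-free neighborhood of $\overline{\mathbb{D}}$; everything else is bookkeeping with the multiplier lemmas. I do not anticipate any serious obstacle, as the separation of the zero set from $\overline{\mathbb{D}}$ is a standard consequence of the isolation of zeros of a nonzero holomorphic function together with the compactness of $\overline{\mathbb{D}}$.
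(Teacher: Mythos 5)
Your proof is correct and follows essentially the same route as the paper: both arguments observe that $1/f \in \mbox{Hol}(\overline{\mathbb{D}})$, apply Lemma \ref{Lemfinmulit} to get $f, 1/f \in \mathcal{M}(\mathcal{H})$ (hence $1/f \in \mathcal{H}$), and conclude via Lemma \ref{Lemfinmulit00}. The only difference is that you spell out the compactness argument giving a zero-free neighborhood of $\overline{\mathbb{D}}$, which the paper treats as immediate.
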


\begin{proof}
If $f$ is analytic on a disc bigger than the open unit disc and, moreover, has no zeros on $\bar{\mathbb{D}}$, then $1/f$ has the same property and thus, by Lemma \ref{Lemfinmulit}, both $f$ and $1/f$ belong to $\mathcal{M}(\mathcal{H})$. In particular,  $1/f \in \mathcal{H}$. Therefore, by Lemma \ref{Lemfinmulit00}, $f$ is cyclic in $\mathcal{H}$.
\end{proof}

\subsection{At least one zero on $\mathbb{T}$}
We now deal with the remaining problem, which is the heart of Theorem \ref{T2}. What remains is to characterize the cyclicity of the function $f(z)=1-z$.

\begin{thm} \label{T3}
Let $f(z)=1-z$. Then the $n$-th optimal norm of $f$ satisfies
\[
c_1 \leq \epsilon^2_n \sum_{k=0}^{n+1} \frac{1}{w_k} \leq c_2
\] and these rates are achieved by the polynomials given by
\[
p^*_n(z)= \sum_{k=0}^n \left( 1- \frac{\,\, \sum_{j=0}^k \frac{1}{w_j} \,\,}{\sum_{j=0}^{n+1} \frac{1}{w_j}} \right) z^k.
\]
In particular, if $(\chi_n/\sqrt{w_n})_{n \geq 0}$ is an orthonormal
basis of  $\mathcal{H}$, then the $n$-th optimal norm of $f$ is
precisely
\[
\epsilon_n = \left(\sum_{k=0}^{n+1} \frac{1}{w_k} \right)^{-1/2}
\]
and the described polynomials are actually the optimal approximants.
\end{thm}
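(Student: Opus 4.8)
The plan is to reduce the computation of $\epsilon_n$ for $f(z)=1-z$ to a sharp weighted $\ell^2$ minimization subject to a single linear constraint. First I would take a generic candidate $p(z)=\sum_{k=0}^n a_k z^k\in\mathcal{P}_n$ and expand $(1-z)p(z)-1$ in the monomial basis. A short telescoping computation gives
\begin{equation*}
(1-z)p(z)-1 = (a_0-1)\chi_0 + \sum_{k=1}^n (a_k-a_{k-1})\chi_k - a_n\chi_{n+1},
\end{equation*}
so the relevant coefficient vector has entries $b_0=a_0-1$, $b_k=a_k-a_{k-1}$ for $1\le k\le n$, and $b_{n+1}=-a_n$. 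The crucial observation is that these telescope: $\sum_{k=0}^{n+1}b_k=-1$, regardless of the choice of the $a_k$.

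Next I would invoke the Riesz basis inequality from our axioms, which yields
\begin{equation*}
c_1\,Q(p) \le \|(1-z)p-1\|_{\mathcal{H}}^2 \le c_2\,Q(p), \qquad Q(p):=\sum_{k=0}^{n+1} w_k|b_k|^2 .
\end{equation*}
Thus both required bounds follow once I minimize the model form $Q$ over all admissible coefficient vectors, i.e.\ over all $(b_k)$ with $\sum_k b_k=-1$. This is a textbook constrained minimization solved by Cauchy--Schwarz:
\begin{equation*}
1 = \Big|\sum_{k=0}^{n+1} b_k\Big|^2 \le \Big(\sum_{k=0}^{n+1} w_k|b_k|^2\Big)\Big(\sum_{k=0}^{n+1}\frac{1}{w_k}\Big),
\end{equation*}
with equality precisely when $b_k$ is proportional to $1/w_k$. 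Hence $\min_p Q(p)=\big(\sum_{k=0}^{n+1}1/w_k\big)^{-1}$, attained at $b_k=-\big(w_k\sum_{j=0}^{n+1}1/w_j\big)^{-1}$. Undoing the substitution via $a_k=1+\sum_{j=0}^k b_j$ then recovers exactly the polynomial displayed in the statement.

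Combining the two displays, for the minimizing polynomial $\tilde p_n$ I obtain $\|\tilde p_n f-1\|_{\mathcal{H}}^2\le c_2\min_p Q$, which yields the upper estimate; while for an \emph{arbitrary} $p$ the lower Riesz bound forces $\|pf-1\|_{\mathcal{H}}^2\ge c_1 Q(p)\ge c_1\min_p Q$, giving the lower estimate. Together these are precisely $c_1\le\epsilon_n^2\sum_{k=0}^{n+1}1/w_k\le c_2$. In the orthonormal case $c_1=c_2=1$ the model form $Q$ \emph{is} the true squared norm, so $\tilde p_n$ is genuinely the optimal approximant and $\epsilon_n=(\sum_{k=0}^{n+1}1/w_k)^{-1/2}$ exactly.

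I expect the only real subtlety — as opposed to routine bookkeeping — to be the correct interpretation of \emph{which} polynomial is optimal in the general setting. When the basis is merely a Riesz basis, the Cauchy--Schwarz minimizer $\tilde p_n$ of the model form $Q$ need not coincide with the true optimal approximant produced by the linear system of Theorem~\ref{T1}, since the latter depends on the genuine inner products and not merely on the weights $w_k$. The two-sided Riesz bound is exactly what rescues the argument: it lets $\tilde p_n$ \emph{sandwich} the true optimal norm between $c_1$ and $c_2$ multiples of the same quantity, so the stated rate is achieved up to the universal constants, with equality precisely in the orthonormal case.
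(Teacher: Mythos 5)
Your proof is correct and takes essentially the same approach as the paper's: the same telescoping expansion of $(1-z)p(z)-1$ with coefficient sum $-1$, the same Cauchy--Schwarz estimate combined with the two-sided Riesz-basis inequality, and the same extremal polynomial (the paper simply verifies the displayed polynomial, whereas you derive it from the equality case of Cauchy--Schwarz). Your closing caveat also matches the paper's framing: in the general Riesz-basis setting the polynomials only achieve the rate up to the constants $c_1$ and $c_2$, and they are claimed to be the true optimal approximants only in the orthonormal case.
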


\begin{proof}

In the case of an orthogonal basis, to check that $p_n^*$ is the
optimal approximant could be reduced to checking that it satisfies
the corresponding linear system in Theorem \ref{T1}. The lower bound
for the optimal norm could be easily deduced from the optimality of
these polynomials. While the above approach is feasible, we present
a different proof that works for any Riesz basis.

 If $p(z) = \sum_{k=0}^{n} a_k z^k$, then
\[
(1-z)p(z)-1 = \sum_{k=0}^{n+1} (a_{k}-a_{k-1})z^k,
\]
with conventions $a_{-1}=1$ and $a_{n+1}=0$. Since
\[
\sum_{k=0}^{n+1} (a_{k}-a_{k-1}) = -1,
\]
the Cauchy-Schwarz inequality implies
\begin{eqnarray*}
1 &\leq& \left(\sum_{k=0}^{n+1} |a_{k}-a_{k-1}| \right)^2\\
&\leq& \sum_{k=0}^{n+1} w_k |a_{k}-a_{k-1}|^2  \times \sum_{k=0}^{n+1} \frac{1}{w_k} \\
&\leq& \frac{1}{c_1}  \, \| (1-z)p(z)-1 \|^2_{\mathcal{H}}  \times \sum_{k=0}^{n+1} \frac{1}{w_k}.
\end{eqnarray*}
Therefore, for each $p \in \mathcal{P}_n$,
\begin{equation} \label{E:eps1}
\|pf-1 \|^2_{\mathcal{H}} \geq \frac{c_1}{\sum_{k=0}^{n+1} \frac{1}{w_k}}.
\end{equation}

On the other hand, with the special choice of $a_k$ suggested in the theorem, we have
\[
a_k -a_{k-1} = \frac{\,\, -\frac{1}{w_k} \,\,}{\sum_{j=0}^{n+1} \frac{1}{w_j}}, \qquad (0 \leq k \leq n+1).
\]
Therefore,
\begin{eqnarray}
\| pf-1 \|^2_{\mathcal{H}} &\leq& c_2 \sum_{k=0}^{n+1} w_k |a_{k}-a_{k-1}|^2 \notag\\
&\leq& \frac{c_2}{ \sum_{k=0}^{n+1} \frac{1}{w_k}} . \label{E:eps2}
\end{eqnarray}
Therefore, by \eqref{E:eps1} and \eqref{E:eps2}, the bounds for the norms follow.

\end{proof}

Passing to the limit, Theorem \ref{T3} immediately implies the following result.

\begin{cor}\label{corap}
Let $f(z)=1-z$. Then the optimal norm of $f$ satisfies
\[
\frac{c_1} {\sum_{k=0}^{\infty} \frac{1}{w_k}} \leq \epsilon^2 \leq \frac{c_2}{\sum_{k=0}^{\infty} \frac{1}{w_k}}.
\]
In particular, if $(\chi_n/\sqrt{w_n})_{n \geq 0}$ is an orthonormal
basis of $\mathcal{H}$, then the optimal norm of $f$ is precisely
\[
\epsilon = \left(\sum_{k=0}^{\infty} \frac{1}{w_k} \right)^{-1/2}.
\]
\end{cor}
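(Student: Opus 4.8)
The plan is to obtain the corollary by simply passing to the limit in the two-sided estimate of Theorem \ref{T3}. Recall that by definition $\epsilon = \lim_{n\to\infty}\epsilon_n$ is the limit of a decreasing sequence, so in particular $\epsilon_n^2 \to \epsilon^2$. At the same time, the partial sums $\sum_{k=0}^{n+1} \frac{1}{w_k}$ increase monotonically to $\sum_{k=0}^\infty \frac{1}{w_k}$ as $n \to \infty$.

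First I would rewrite the conclusion of Theorem \ref{T3} in the equivalent form
\[
\frac{c_1}{\sum_{k=0}^{n+1}\frac{1}{w_k}} \leq \epsilon_n^2 \leq \frac{c_2}{\sum_{k=0}^{n+1}\frac{1}{w_k}}.
\]
Then I would split into two cases according to whether $\sum_{k=0}^\infty \frac{1}{w_k}$ converges or diverges. If the series diverges, the denominators tend to $+\infty$, so both the upper and the lower bound tend to $0$; hence $\epsilon_n^2 \to 0$, giving $\epsilon = 0$, which agrees with the asserted estimate under the convention $c/\infty = 0$. If the series converges to a finite value $S := \sum_{k=0}^\infty \frac{1}{w_k}$, then letting $n \to \infty$ and using the continuity of $x \mapsto c/x$ at $x = S > 0$ yields $c_1/S \leq \epsilon^2 \leq c_2/S$, which is exactly the claimed bound.

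For the orthonormal case $c_1 = c_2 = 1$, Theorem \ref{T3} provides the exact identity $\epsilon_n = \big(\sum_{k=0}^{n+1} \frac{1}{w_k}\big)^{-1/2}$ rather than only a two-sided bound, so I would pass to the limit directly,
\[
\epsilon = \lim_{n\to\infty}\left(\sum_{k=0}^{n+1}\frac{1}{w_k}\right)^{-1/2} = \left(\sum_{k=0}^{\infty}\frac{1}{w_k}\right)^{-1/2},
\]
the identity being valid in both the convergent and the divergent regimes (with the convention $(+\infty)^{-1/2}=0$). There is essentially no hard step here: the only point requiring a word of care is the divergent case, where one must interpret the displayed fractions as zero and observe that the squeeze still forces $\epsilon = 0$. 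Everything else is just the continuity of the limit combined with the monotone convergence of the partial sums, which is why the passage is indeed immediate.
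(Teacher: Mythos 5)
Your proposal is correct and follows exactly the paper's route: the paper proves the corollary by ``passing to the limit'' in Theorem \ref{T3}, which is precisely what you do, only with the (harmless and welcome) extra care of treating the convergent and divergent cases of $\sum_k 1/w_k$ separately. Nothing further is needed.
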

In particular, Corollary~\ref{corap} says that $f(z)=1-z$ is cyclic in $\mathcal{H}$ if and only if
\begin{equation}\label{cns-cyclicite-10}
\sum_{k=0}^{\infty}\frac{1}{w_k}=\infty.
\end{equation}

\subsection{Proof of Theorem \ref{T2}}
Now, we have all necessary tools to establish the proof of Theorem \ref{T2}.

\noindent $(a) \Longrightarrow (b)$: Trivial.

\noindent $(b) \Longrightarrow (c)$: We decompose $f$ as in
\eqref{E:zero-decomp}, and write $f=gp$, where the polynomial $p$
has all its zeros on $\mathbb{T}$ and has at least one zero there.
Since $f$ is cyclic, there is a sequence of polynomials $(p_n)_{n
\geq 1}$ such that $fp_n \longrightarrow g$ in $\mathcal{H}$. But,
by Lemma \ref{Lemfinmulit}, $1/g \in  \mathcal{M}(\mathcal{H})$.
Therefore, $pp_n \longrightarrow 1$ in $\mathcal{H}$. Consider one
of the factors of $p$, say $1-\bar{\lambda}_kz$. The previous
relation means that $1-\bar{\lambda}_kz$ is cyclic. Hence, by Lemma
\ref{rotation}, $1-z$ is cyclic and then Corollary \ref{corap}
ensures that \eqref{cns-cyclicite-10} holds.

\noindent $(c) \Longrightarrow (a)$: Assume that
\eqref{cns-cyclicite-10} holds. Let $f$ be a function in
$\mbox{Hol}(\overline{\mathbb{D}})$ which has no zeros inside
$\mathbb{D}$. The function $f$ has only a finite number of zeros on
$\mathbb{T}$ (by the isolated zeros principle). Then, we can
decompose $f$ as in \eqref{E:zero-decomp}. By Theorem \ref{T4}, the
function $g$ is cyclic in $\mathcal{H}$.  Moreover, we can write
\[
1- \lambda_k z = U_{\lambda_k} (1-z), \qquad (\lambda_k \in \mathbb{T}).
\]
As it was discussed in Corollary \ref{corap}, under the  assumption \eqref{cns-cyclicite-10}, the function $z \longmapsto 1-z$ is cyclic. Moreover, the cyclicity is invariant under rotation (Lemma \ref{rotation}). Hence, each $1- \lambda_k z$ is cyclic and then, by Lemma~\ref{Lemfinmulit} and Lemma~\ref{lemprodc}, $f$ is cyclic.

\section{Sharp rates and algebraic properties} \label{section4}

For the case when $g \in \mathfrak{H}$, and the boundary zeros of $f$ are simple (i.e., multiplicity one), we can actually sharpen the proof of Theorem \ref{T2} to show that the optimal norms for $g$ are comparable to those for the particular function $z \longmapsto1-z$. Any function in $\mathfrak{H}$ with at least one boundary zero has $n$-optimal norms decaying not faster than those for $z \longmapsto1-z$. To see that there is also an upper control, define $\phi_H(n) = \sum_{k=0}^n \frac{1}{\omega_k}$ and denote by $\mathcal{A}(\TT)$, the analytic Wiener algebra, that is, the space of analytic functions for which the Taylor coefficients are absolutely summable, with norm of $f(z)= \sum_{n=0}^{\infty} a_n z^n$ given by
\[
\|f\|_{\mathcal{A}(\TT)}= \sum_{n=0}^{\infty} |a_n|.
\]
Then we have the following result.

\begin{thm} \label{T:ptop=t}
Let $f \in \mathfrak{H}$ with all its zeros on the boundary of the unit disc being simple zeros. Then there is a sequence of polynomials $(p_n)_{n \geq 1}$ such that the sequence $(p_nf)_{n \geq 1}$ is uniformly bounded in the $\mathcal{A}(\TT)$-norm and
\[
\|p_nf-1\|^2 \leq \frac{C}{\phi_\mathcal{H}(n)},
\]
where $C>0$ is a constant depending on $f$ and $\mathcal{H}$, but not on $n$.
\end{thm}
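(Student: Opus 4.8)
The plan is to reduce everything to the explicit single--zero approximants of Theorem \ref{T3} and then to \emph{linearise} the multi--zero problem by a partial fraction decomposition, which is precisely what avoids the troublesome cross terms one would meet in a naive product construction. First I would factor $f$ as in \eqref{E:zero-decomp}, writing $f = g\,q$ with $q(z)=\prod_{k=1}^{m}(1-\bar\lambda_k z)$, where $\lambda_1,\dots,\lambda_m\in\mathbb{T}$ are \emph{distinct} (the boundary zeros being simple) and $g\in\mbox{Hol}(\overline{\mathbb{D}})$ has no zeros on $\overline{\mathbb{D}}$. By Lemma \ref{Lemfinmulit}, $1/g\in\mbox{Hol}(\overline{\mathbb{D}})\subset\mathcal{M}(\mathcal{H})$, so $g$ is harmless and I will reattach it at the end; the heart of the matter is the polynomial $q$.

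For each $k$, let $p_n^{*}$ be the optimal polynomial of Theorem \ref{T3} for $1-z$ and set $P_n^{(k)}(z)=p_n^{*}(\bar\lambda_k z)$, the rotated approximant associated with the factor $1-\bar\lambda_k z$ (cf. Lemma \ref{rotation}). The computation in the proof of Theorem \ref{T3} gives the explicit remainder
\[
(1-\bar\lambda_k z)P_n^{(k)}(z)-1=-\frac{1}{\phi_{\mathcal{H}}(n+1)}\sum_{j=0}^{n+1}\frac{\bar\lambda_k^{\,j}}{w_j}\,z^{j},
\]
from which I read off the two facts that drive the proof: its $\mathcal{A}(\mathbb{T})$--norm equals $\phi_{\mathcal{H}}(n+1)^{-1}\sum_{j=0}^{n+1}w_j^{-1}=1$, so that $\|(1-\bar\lambda_k z)P_n^{(k)}\|_{\mathcal{A}(\mathbb{T})}\le 2$, while its $\mathcal{H}$--norm squared is at most $c_2/\phi_{\mathcal{H}}(n+1)$.

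The key step is the partial fraction expansion $1/q(z)=\sum_{k=1}^{m}A_k/(1-\bar\lambda_k z)$, with $A_k=\big(\prod_{l\neq k}(1-\bar\lambda_l\lambda_k)\big)^{-1}$ well defined because the $\lambda_k$ are distinct. Writing $Q_k(z)=\prod_{l\neq k}(1-\bar\lambda_l z)$, clearing denominators yields the identity $\sum_{k=1}^{m}A_k Q_k\equiv 1$. I then set $\widetilde P_n=\sum_{k=1}^{m}A_k P_n^{(k)}$ as approximant to $1/q$, and compute
\[
q\,\widetilde P_n-1=\sum_{k=1}^{m}A_k Q_k\big[(1-\bar\lambda_k z)P_n^{(k)}\big]-1=\sum_{k=1}^{m}A_k Q_k\big[(1-\bar\lambda_k z)P_n^{(k)}-1\big].
\]
This is a \emph{linear} combination of the single--zero remainders multiplied by the \emph{fixed} polynomials $Q_k$; there are no products of two remainders, and this is exactly the obstacle that the partial fractions sidestep. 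Since each $Q_k$ is a fixed multiplier, $\|q\widetilde P_n-1\|_{\mathcal{H}}\le\sum_k|A_k|\,\|M_{Q_k}\|\,\|(1-\bar\lambda_k z)P_n^{(k)}-1\|_{\mathcal{H}}\le C_1\,\phi_{\mathcal{H}}(n+1)^{-1/2}$, and since $\mathcal{A}(\mathbb{T})$ is a Banach algebra, $\|q\widetilde P_n\|_{\mathcal{A}(\mathbb{T})}\le 2\sum_k|A_k|\,\|Q_k\|_{\mathcal{A}(\mathbb{T})}$ is bounded independently of $n$.

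Finally I would reattach $g$ by setting $p_n=\rho_n\widetilde P_n$, where $\rho_n$ is the degree--$n$ Taylor partial sum of $1/g$. Writing $q\widetilde P_n=1+R_n$ with $\|R_n\|_{\mathcal{H}}\le C_1\phi_{\mathcal{H}}(n+1)^{-1/2}$, one has $p_n f-1=(\rho_n g-1)+(\rho_n g)R_n$. The first term is $(\rho_n-1/g)g$, which decays geometrically in $\mathcal{H}$ because $1/g\in\mbox{Hol}(\overline{\mathbb{D}})$ and $g\in\mathcal{M}(\mathcal{H})$, hence is eventually dominated by $\phi_{\mathcal{H}}(n)^{-1/2}$. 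For the second term I would check that $\|M_{\rho_n g}\|$ is bounded uniformly in $n$: since $\rho_n g-1\to 0$ uniformly on $\{|z|\le r\}$ for some $r>1$, the Cauchy estimates together with $\rho(S)=1$ (Lemma \ref{Lemfinmulit0}, which makes $\sum_k r^{-k}\|S^k\|$ finite) give $\|M_{\rho_n g-1}\|\le C(r)\sup_{|z|=r}|\rho_n g-1|\to 0$, so $\|M_{\rho_n g}\|\to 1$. Combining the bounds gives $\|p_n f-1\|_{\mathcal{H}}^2\le C/\phi_{\mathcal{H}}(n)$, while the algebra property gives the uniform bound $\|p_n f\|_{\mathcal{A}(\mathbb{T})}\le\|\rho_n g\|_{\mathcal{A}(\mathbb{T})}\,\|q\widetilde P_n\|_{\mathcal{A}(\mathbb{T})}$, which stays bounded since $\rho_n g\to 1$ in $\mathcal{A}(\mathbb{T})$. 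The only real subtlety, beyond spotting the partial fraction linearisation, is this uniform multiplier control of the zero--free factor; everything else is bookkeeping.
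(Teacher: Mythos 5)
Your argument is correct, and it takes a genuinely different route from the paper's. The paper omits the proof, deferring to \cite[Proposition 3.2]{BCLSS1}: there the approximants are weighted Taylor sections of $1/f$ itself, namely $\sum_{k=0}^{n}\bigl(1-\phi_{\mathcal{H}}(k)/\phi_{\mathcal{H}}(n+1)\bigr)\widehat{1/f}(k)\,z^{k}$, and essentially all of the work is absorbed into the convolution estimate recorded as the Lemma displayed right after the theorem (the analogue of \cite[Lemma 3.3]{BCLSS1}); that is where the simplicity of the boundary zeros enters in the paper's approach. You instead exploit simplicity of the zeros through the partial fraction identity $\sum_{k}A_{k}Q_{k}\equiv 1$, which linearises the problem: the error $q\widetilde{P}_{n}-1$ becomes a fixed linear combination of the explicit single-zero remainders of Theorem \ref{T3} multiplied by fixed polynomials, so no products of errors and no coefficient/convolution estimates ever appear. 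Your computations check out: the remainder formula $(1-\bar\lambda_{k}z)P_{n}^{(k)}-1=-\phi_{\mathcal{H}}(n+1)^{-1}\sum_{j=0}^{n+1}\bar\lambda_{k}^{j}z^{j}/w_{j}$ is right, its $\mathcal{A}(\TT)$-norm is exactly $1$ and its $\mathcal{H}$-norm squared is at most $c_{2}/\phi_{\mathcal{H}}(n+1)$ (this needs only the Riesz-basis inequality, not unitarity of $U_{\lambda}$), and your uniform control of $\|M_{\rho_{n}g}\|$ via Cauchy estimates together with $\rho(S)=1$ is the right way to handle the one genuinely delicate point in reattaching the zero-free factor. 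What your approach buys is elementarity: everything follows from Theorem \ref{T3}, Lemma \ref{Lemfinmulit0} and the Banach-algebra property of $\mathcal{A}(\TT)$.

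Two remarks. First, your claim that the geometric error $\|(\rho_{n}-1/g)g\|_{\mathcal{H}}$ is eventually dominated by $\phi_{\mathcal{H}}(n)^{-1/2}$ silently uses that $\phi_{\mathcal{H}}$ grows subexponentially; this is true, but it should be stated, and it follows from \eqref{E:lim1}. Second, and more substantively: your $p_{n}=\rho_{n}\widetilde{P}_{n}$ has degree roughly $2n$. The theorem as written imposes no degree restriction, so you have proved the literal statement; but the intended consequence (see the lead-in to Section \ref{section4}) is an upper bound on the $n$-th optimal norms, for which one wants $p_{n}\in\mathcal{P}_{n}$. The construction in \cite{BCLSS1} produces degree-$n$ polynomials directly, hence $\epsilon_{n}^{2}\leq C/\phi_{\mathcal{H}}(n)$; yours yields $\epsilon_{2n}^{2}\leq C/\phi_{\mathcal{H}}(n)$, and upgrading this to a bound on $\epsilon_{n}$ requires a doubling-type condition $\phi_{\mathcal{H}}(2n)\leq C\,\phi_{\mathcal{H}}(n)$, which holds for the spaces $\mathcal{D}_{\alpha}$ but not for every admissible weight: for instance $w_{k}=e^{-\sqrt{k}}$ satisfies \eqref{E:lim1} while $\phi_{\mathcal{H}}(2n)/\phi_{\mathcal{H}}(n)\to\infty$. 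If you balance the degrees differently (say $\rho$ of degree $m_{n}$ and $\widetilde{P}$ of degree $n-m_{n}$), the same obstruction reappears, so this loss seems intrinsic to your decomposition rather than a bookkeeping artifact.
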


The proof is omitted since it works in the same way as that of \cite[Proposition 3.2]{BCLSS1}. The only minor change is the statement of the  \cite[Lemma 3.3]{BCLSS1}, which should read as follows. Denote by $\hat{h}(n)$, the Taylor coefficient of degree $n$ for the function $h$.

\begin{lem}
Suppose $f$ is a polynomial of degree $t$ with no zeros inside the
disc
 and with all its zeros on the boundary being simple zeros.
 If $n>t$, then there is a constant $C=C(\mathcal{H},f)$ such that
\[
\left|\sum_{k=0}^{n} \phi_\mathcal{H}(k) \widehat{1/f}(k) \hat{f}(n-k) \right| \leq \frac{C}{\omega_n}, \qquad (n \geq 0).
\]
\end{lem}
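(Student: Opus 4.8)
The plan is to exploit that $f$ is a polynomial of degree $t$, so $\hat f(n-k)=0$ unless $n-t\le k\le n$; hence for $n>t$ the sum in question collapses to the $t+1$ terms
\[
\sum_{k=n-t}^{n}\phi_\mathcal{H}(k)\,\widehat{1/f}(k)\,\hat f(n-k).
\]
The decisive tool is the convolution identity coming from $f\cdot(1/f)=1$: comparing $n$-th Taylor coefficients gives, for every $n\ge 1$,
\[
\sum_{k=0}^{n}\widehat{1/f}(k)\,\hat f(n-k)=0.
\]
Since $n>t$ forces $n\ge 1$, I would multiply this identity by the constant $\phi_\mathcal{H}(n)$ and subtract it from the target sum, rewriting the latter as
\[
\sum_{k=n-t}^{n}\bigl(\phi_\mathcal{H}(k)-\phi_\mathcal{H}(n)\bigr)\,\widehat{1/f}(k)\,\hat f(n-k).
\]
This is the crucial gain: each coefficient $\phi_\mathcal{H}(k)$, which grows without bound, is replaced by the \emph{increment} $\phi_\mathcal{H}(k)-\phi_\mathcal{H}(n)$, which I will show is small.

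Next I would estimate that increment. For $n-t\le k\le n$ we have
\[
\bigl|\phi_\mathcal{H}(k)-\phi_\mathcal{H}(n)\bigr|=\sum_{j=k+1}^{n}\frac{1}{\omega_j},
\]
a sum of at most $t$ terms. Each term is comparable to $1/\omega_n$: writing $\omega_n/\omega_j=\prod_{i=j}^{n-1}(\omega_{i+1}/\omega_i)$ as a product of at most $t$ factors and invoking \eqref{E:lim1}, the ratios $\omega_{i+1}/\omega_i$ stay in a bounded interval, so $\omega_n/\omega_j\le C_1$ uniformly in $n$, whence $1/\omega_j\le C_1/\omega_n$. Summing the at most $t$ terms yields $|\phi_\mathcal{H}(k)-\phi_\mathcal{H}(n)|\le C_2/\omega_n$ with $C_2=C_2(\mathcal{H},t)$ independent of $n$.

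Finally I would control the two coefficient factors. The numbers $\hat f(n-k)$ range over the finitely many fixed coefficients of $f$, hence $|\hat f(n-k)|\le\|f\|_{\mathcal{A}(\TT)}$. The factors $\widehat{1/f}(k)$ are where the simple-zero hypothesis enters: writing $f(z)=a\prod_{\ell=1}^{t}(1-\bar\lambda_\ell z)$ with the $\lambda_\ell\in\TT$ distinct, a partial-fraction expansion gives $1/f(z)=\sum_{\ell=1}^{t}A_\ell/(1-\bar\lambda_\ell z)$, so that $\widehat{1/f}(k)=\sum_{\ell=1}^{t}A_\ell\bar\lambda_\ell^{\,k}$ and therefore $|\widehat{1/f}(k)|\le\sum_{\ell=1}^{t}|A_\ell|=:C_3$ uniformly in $k$. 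Combining the three bounds over the $t+1$ surviving terms gives the claim with $C=(t+1)\,C_2\,C_3\,\|f\|_{\mathcal{A}(\TT)}$, a constant depending only on $\mathcal{H}$ and $f$.

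I expect the last step to be the genuine obstacle, and it is exactly here that simplicity of the boundary zeros is indispensable: a zero of multiplicity $m\ge 2$ produces a pole of order $m$ in $1/f$, whose Taylor coefficients grow like $k^{m-1}$, and this polynomial growth would overwhelm the $1/\omega_n$ bound once $\phi_\mathcal{H}$ grows slowly. The weight comparison in the second step is routine but should be stated carefully, since its constant $C_1$, though uniform in $n$, depends on $t$ and on the sequence $(\omega_j)$, matching the stated dependence $C=C(\mathcal{H},f)$.
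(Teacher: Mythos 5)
Your proof is correct and is essentially the argument this paper relies on: the paper omits the proof, deferring to Lemma 3.3 of \cite{BCLSS1}, which rests on the same three ingredients you use --- the collapse of the sum to the indices $n-t\le k\le n$ since $\deg f=t$, the convolution identity from $f\cdot(1/f)=1$ allowing $\phi_\mathcal{H}(k)$ to be replaced by the increment $\phi_\mathcal{H}(k)-\phi_\mathcal{H}(n)$, which is $O(1/\omega_n)$ by \eqref{E:lim1}, and partial fractions exploiting simplicity of the boundary zeros to bound $\widehat{1/f}(k)$ uniformly. One small point to tidy up: the statement permits $f$ to have zeros \emph{outside} $\overline{\mathbb{D}}$ of arbitrary multiplicity, which your factorization $f(z)=a\prod_{\ell=1}^{t}(1-\bar\lambda_\ell z)$ with all $\lambda_\ell\in\TT$ excludes; such zeros contribute partial-fraction terms of the form $B/(1-\bar\mu z)^s$ with $|\mu|<1$, whose Taylor coefficients are $O(k^{s-1}|\mu|^k)$ and hence bounded, so $|\widehat{1/f}(k)|\le C_3$ still holds and your estimate goes through unchanged.
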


\subsection{Products of functions}
An interesting open question is to find an estimate on the optimal norms for higher multiplicity. It seems natural to ask whether these results are true without the assumptions on the multiplicity of the zeros. This would be true if we could give a positive answer to the following open problem, which is interesting by itself.

\begin{prob}
Is $\mathcal{H} \cap \mathcal{A}(\TT)$ an algebra?
\end{prob}
The answer to this question is positive whenever $\{\omega_n\}_{n \geq 0}$ is a bounded sequence, since then $\mathcal{A}(\TT) \subset \mathcal{H}$. It also holds whenever $\mathcal{H}$ is an algebra (trivially), and when $\mathcal{H}$ is the Dirichlet space. To see this last claim, observe that if $f$ and $g$ are bounded Dirichlet functions, then we have $$\|fg\|^2_D \leq |f(0)g(0)|^2 + \|f\|^2_{H^{\infty}} \|g\|^2_D + \|g\|^2_{H^{\infty}} \|f\|^2_D.$$

\subsection{Quotients of functions}
The following, somewhat related, question is settled in \cite{aleman} for some special weights which are the moments of a function in $L^2(\mathbb{T})$. However, the question makes sense in our general setting.

\begin{prob}
Can every function $f \in \mathcal{H}$ be written as the quotient of
two bounded functions in $\mathcal{H}$, i.e.
\[
f = \frac{g}{h}, \qquad (g,h \in \mathcal{H} \cap H^\infty)?
\]
\end{prob}

\section{Asymptotic distribution of the zeros of the optimal polynomials}\label{section5}

In \cite{BCLSS1}, it was pointed out that the distribution of the zeros of the optimal polynomials should contain some information about the cyclicity of the function. A classical result of this type is the Enestr\"om theorem (1893) which restricts the region where the zeros of a polynomial may lie, in terms of its coefficients. See, for instance, \cite{ASV1}.

\begin{thm}[Enestr\"om]
Let $p(z)= \sum_{k=0}^n a_k z^k$, where $a_k >0$ for $0 \leq k \leq n$.
Then all the zeros of $p$ lie in the annulus $\{\alpha \leq |z|\leq \beta \}$, where
\[
\alpha= \min_{0\leq k <n} \frac{a_k}{a_{k+1}}
\qquad \mbox{and} \qquad
\beta= \max_{0\leq k <n} \frac{a_k}{a_{k+1}}.
\]
\end{thm}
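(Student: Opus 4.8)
The plan is to establish the two inclusions $|z|\le\beta$ and $|z|\ge\alpha$ separately, reducing the lower bound to the upper one by passing to the reciprocal polynomial. For the upper bound, the key device is to multiply $p$ by the linear factor $(\beta-z)$ and read off the signs of the resulting coefficients. Writing $zp(z)=\sum_{k=1}^{n+1}a_{k-1}z^k$, I would expand
\[
(\beta-z)p(z) = \beta a_0 + \sum_{k=1}^{n}(\beta a_k-a_{k-1})z^k - a_n z^{n+1}.
\]
By the very definition of $\beta=\max_{0\le j<n}a_j/a_{j+1}$, we have $\beta a_k\ge a_{k-1}$ for every $1\le k\le n$, so all the coefficients $\beta a_k-a_{k-1}$ are nonnegative, as is $\beta a_0$.

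Now suppose $p(z_0)=0$ with $r:=|z_0|$. Since then $(\beta-z_0)p(z_0)=0$, the displayed expansion gives $a_n z_0^{n+1}=\beta a_0+\sum_{k=1}^{n}(\beta a_k-a_{k-1})z_0^k$, and taking moduli with the triangle inequality (legitimate precisely because the coefficients are nonnegative) yields $a_n r^{n+1}\le \beta a_0+\sum_{k=1}^{n}(\beta a_k-a_{k-1})r^k$. On the other hand, evaluating the same expansion at the positive real number $r$ gives the identity $\beta a_0+\sum_{k=1}^{n}(\beta a_k-a_{k-1})r^k = a_n r^{n+1}+(\beta-r)p(r)$, where $p(r)>0$ because all $a_k>0$. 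If $r>\beta$, the term $(\beta-r)p(r)$ is strictly negative, forcing the right-hand side to be strictly less than $a_n r^{n+1}$; this contradicts the modulus inequality. Hence no zero can have modulus exceeding $\beta$.

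For the lower bound I would apply the upper bound to the reversed polynomial $\tilde p(z)=z^n p(1/z)=\sum_{j=0}^{n}a_{n-j}z^j$, whose coefficients $b_j=a_{n-j}$ are again strictly positive. Its associated maximal ratio is $\max_{0\le j<n}b_j/b_{j+1}=\max_{0\le k<n}a_{k+1}/a_k=1/\alpha$, so the already-proven upper bound places all zeros of $\tilde p$ in $|w|\le 1/\alpha$. Since $a_0=p(0)\ne 0$, zero is not a root of $p$, and the zeros of $p$ are exactly the reciprocals of those of $\tilde p$; thus $|z|\ge\alpha$ for every root of $p$, completing the annulus.

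The computations here are short, so the only real care needed is bookkeeping: verifying the sign of each coefficient $\beta a_k-a_{k-1}$ from the max/min definitions, and keeping the index reversal straight when translating the maximal ratio of $\tilde p$ back into $\alpha$. The one genuinely delicate point, where the argument could go wrong if rushed, is the juxtaposition of the modulus inequality (an upper estimate obtained from the triangle inequality) with the exact real-variable identity (which produces a strict upper bound when $r>\beta$); these must point in opposite directions to close the contradiction, and this works only because positivity of the $a_k$ simultaneously guarantees the correct coefficient signs and the strict positivity of $p(r)$.
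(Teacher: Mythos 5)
Your proof is correct. Note, however, that the paper does not prove this statement at all: Enestr\"om's theorem is quoted as a classical result and referred to the literature (the Anderson--Saff--Varga paper cited there), so there is no internal proof to compare against. Your argument is the standard one for Enestr\"om--Kakeya-type bounds, and it is complete: multiplying by $(\beta-z)$ makes every coefficient of $(\beta-z)p(z)$ nonnegative except the leading one, the triangle inequality applied at a hypothetical zero $z_0$ with $|z_0|=r$ gives $a_n r^{n+1}\le \beta a_0+\sum_{k=1}^{n}(\beta a_k-a_{k-1})r^k$, while the exact evaluation at $r$ shows this right-hand side equals $a_n r^{n+1}+(\beta-r)p(r)$, which is strictly smaller than $a_n r^{n+1}$ when $r>\beta$ because $p(r)>0$; the contradiction is airtight, and the non-strict inequality correctly permits zeros on $|z|=\beta$, matching the closed annulus in the statement. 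The reduction of the lower bound to the upper bound via the reversed polynomial $\tilde p(z)=z^np(1/z)$ is also handled correctly, including the two points that are easy to fumble: the index reversal giving maximal ratio $1/\alpha$, and the observation that $p(0)=a_0\neq 0$ so that zeros of $p$ correspond bijectively to reciprocals of zeros of $\tilde p$. The only unstated (and harmless) convention is that $n\ge 1$, which the theorem's min/max notation already presupposes.
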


Suppose now that the monomials form an orthogonal basis in the
space. Applying this result to the optimal polynomials for
$f(z)=1-z$ (see Theorem \ref{T3}) implies the following conclusion.

\begin{cor}
The zeros of $p_n^*$ lie in the region
\[
\left\{ \min_{0 \leq k <n}\omega_{k+1} \sum_{j=k+2}^{n+1} \frac{1}{\omega_j}  \leq \frac{1}{ |z| - 1} \leq \max_{0 \leq k <n} \omega_{k+1} \sum_{j=k+2}^{n+1} \frac{1}{\omega_j} \right\}.
\]
\end{cor}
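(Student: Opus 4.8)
The plan is to apply the Eneström theorem directly to the explicit optimal polynomials produced by Theorem \ref{T3}, so that the whole argument collapses to an algebraic simplification of consecutive coefficient ratios. Writing $D = \sum_{j=0}^{n+1} \frac{1}{\omega_j}$, the coefficients of $p_n^*$ are
\[
a_k = 1 - \frac{\sum_{j=0}^{k} \frac{1}{\omega_j}}{D} = \frac{\sum_{j=k+1}^{n+1} \frac{1}{\omega_j}}{D}, \qquad (0 \le k \le n).
\]
First I would check the hypothesis of the theorem: for $0 \le k \le n$ the index set $\{k+1,\dots,n+1\}$ is nonempty, the weights $\omega_j$ are strictly positive, and $D>0$, so each $a_k$ is a sum of positive terms over a positive denominator, hence $a_k>0$. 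The theorem therefore applies and confines the zeros of $p_n^*$ to the annulus $\{\alpha \le |z| \le \beta\}$ with $\alpha = \min_{0\le k<n} a_k/a_{k+1}$ and $\beta = \max_{0\le k<n} a_k/a_{k+1}$.

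The key computation is the ratio itself. Cancelling the common factor $D$ and splitting off the $j=k+1$ term of the numerator gives
\[
\frac{a_k}{a_{k+1}} = \frac{\sum_{j=k+1}^{n+1} \frac{1}{\omega_j}}{\sum_{j=k+2}^{n+1} \frac{1}{\omega_j}} = 1 + \frac{1}{\omega_{k+1} \sum_{j=k+2}^{n+1} \frac{1}{\omega_j}}.
\]
In particular every ratio strictly exceeds $1$, so $\alpha,\beta>1$ and the quantity $|z|-1$ is positive throughout the annulus. Abbreviating $R_k = \omega_{k+1}\sum_{j=k+2}^{n+1}\frac{1}{\omega_j}$, the identity above reads $\frac{a_k}{a_{k+1}} = 1 + \frac{1}{R_k}$.

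Finally I would translate the annulus into the stated form. Since $t \mapsto 1/t$ reverses order on the positive reals, minimizing $1 + 1/R_k$ amounts to maximizing $R_k$, and conversely, so that
\[
\alpha = 1 + \frac{1}{\max_{0\le k<n} R_k}, \qquad \beta = 1 + \frac{1}{\min_{0\le k<n} R_k}.
\]
Subtracting $1$ from each of $\alpha \le |z|$ and $|z| \le \beta$ and inverting (legitimate because $|z|-1>0$) turns the first into $\frac{1}{|z|-1} \le \max_k R_k$ and the second into $\min_k R_k \le \frac{1}{|z|-1}$; together these are exactly the claimed region. The only place demanding care is this order-reversal bookkeeping under inversion, together with the observation that the min and max get interchanged; there is no genuine obstacle, the corollary being a direct consequence of the explicit formula for $p_n^*$.
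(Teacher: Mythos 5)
Your proposal is correct and is exactly the argument the paper intends: the corollary is stated as a direct application of Enestr\"om's theorem to the explicit coefficients of $p_n^*$ from Theorem \ref{T3}, and your computation of the ratios $a_k/a_{k+1} = 1 + \bigl(\omega_{k+1}\sum_{j=k+2}^{n+1}\tfrac{1}{\omega_j}\bigr)^{-1}$ together with the inversion bookkeeping fills in precisely the steps the paper leaves implicit. The only detail worth making explicit is the standing assumption (stated just before the corollary) that the monomials form an orthogonal basis, which is what guarantees the polynomials of Theorem \ref{T3} really are the optimal approximants $p_n^*$.
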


It would be interesting to sharpen this result or to solve the following problem at least in the case when $f(z)=1-z$.
\begin{prob}
Find the asymptotic distribution of the zeros of the optimal approximants to $1/f$, in terms of the function $f$ and the space $\mathcal{H}$.
\end{prob}

However, a much more natural condition arises when we apply Enestr\"om's result to find points where $p_n^* f -1$ is zero, which is, in fact, a function that should be close to zero uniformly on the compact subsets of the disc.
\begin{cor}
Let $f(z)=1-z$ and $p_n^*$ the corresponding optimal polynomial of degree $n$. Then the zeros of $p_n^* f -1$ lie in the region
\[
\left\{ \min_{0 \leq k <n}\frac{\omega_{k+1}}{\omega_k} \leq |z| \leq \max_{0 \leq k <n}\frac{\omega_{k+1}}{\omega_k} \right\}.
\]
\end{cor}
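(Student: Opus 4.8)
The plan is to reduce the statement to an explicit application of Eneström's theorem, after first identifying the polynomial $p_n^*f-1$ in closed form; in fact most of the work is already contained in the proof of Theorem~\ref{T3}. Writing $p(z)=\sum_{k=0}^n a_k z^k$ with the conventions $a_{-1}=1$ and $a_{n+1}=0$, that proof records the telescoping identity $(1-z)p(z)-1=\sum_{k=0}^{n+1}(a_k-a_{k-1})z^k$, together with the fact that for the optimal coefficients one has $a_k-a_{k-1}= -\omega_k^{-1}/\sum_{j=0}^{n+1}\omega_j^{-1}$ for $0\le k\le n+1$ (here $\omega_k$ denotes the weight written $w_k$ in Theorem~\ref{T3}). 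Substituting this in, I would obtain
\[
p_n^*f-1=\frac{-1}{\sum_{j=0}^{n+1}\omega_j^{-1}}\,\sum_{k=0}^{n+1}\frac{z^k}{\omega_k}.
\]

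The second step is to feed this into Eneström's theorem. Since the leading factor is a nonzero constant, the zeros of $p_n^*f-1$ are exactly the zeros of $q(z)=\sum_{k=0}^{n+1}\omega_k^{-1}z^k$. Each coefficient $\omega_k^{-1}$ is strictly positive, because the weights are assumed strictly positive, so the positivity hypothesis of Eneström's theorem is met. Applying that theorem to $q$, whose degree is $n+1$, the zeros lie in the annulus whose radii are the smallest and largest of the consecutive ratios $\tfrac{\omega_k^{-1}}{\omega_{k+1}^{-1}}=\tfrac{\omega_{k+1}}{\omega_k}$; these ratios run over $0\le k\le n$, which yields precisely the region in the statement.

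There is no genuine obstacle here: the difficult part, namely producing the optimal polynomial and the telescoped form of $(1-z)p-1$, was already carried out in Theorem~\ref{T3}, and Eneström's theorem is invoked as a black box. The only care needed is bookkeeping --- confirming that $q$ has exact degree $n+1$ (so that the index range for the extremal ratios is the full $0\le k\le n$) and that all coefficients are positive. It is worth remarking that, in contrast with the preceding corollary where Eneström was applied to $p_n^*$ itself and produced radii built from tail sums $\sum_j\omega_j^{-1}$, the reciprocal-weight pattern here collapses so that the region depends only on the \emph{local} ratios $\omega_{k+1}/\omega_k$; this is exactly the sense in which the condition is much more natural.
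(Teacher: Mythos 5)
Your proposal follows exactly the route the paper intends: the paper offers no explicit proof, presenting the corollary as a direct application of Enestr\"om's theorem to $p_n^*f-1$, and your two steps (the closed form of $p_n^*f-1$ extracted from the proof of Theorem~\ref{T3}, then Enestr\"om applied to $q(z)=\sum_{k=0}^{n+1}w_k^{-1}z^k$) are precisely that argument.

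One piece of bookkeeping, however, deserves to be flagged, because your last sentence of the second paragraph is not accurate: your computation does \emph{not} yield ``precisely the region in the statement.'' Since $p_n^*f-1$ has exact degree $n+1$ (its leading coefficient is $-a_n=-w_{n+1}^{-1}\bigl/\sum_{j=0}^{n+1}w_j^{-1}\neq 0$, as you note), Enestr\"om's theorem --- stated in the paper for a degree-$n$ polynomial with ratios indexed by $0\le k<n$ --- produces here the ratios $w_{k+1}/w_k$ for $0\le k<n+1$, i.e. $0\le k\le n$, exactly as you derived. The corollary as printed restricts the index to $0\le k<n$, which gives a strictly smaller annulus, and that smaller annulus is in general false. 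For instance, take $n=1$, $w_0=w_1=1$, $w_2=M$ large: the zeros of $p_1^*f-1$ are the zeros of $1+z+z^2/M$, one of which has modulus roughly $M-1$, while the printed range ($k=0$ only) collapses to the circle $|z|=w_1/w_0=1$. Your annulus $1\le|z|\le M$ does contain both zeros. So your derivation is correct and the discrepancy is an off-by-one slip in the paper's statement, not a gap in your argument; you should simply not claim literal agreement with the printed index range, but rather state the corrected range $0\le k\le n$.
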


\end{document}